\newtheorem{theorem}{Theorem}[section]
\newtheorem{lemma}[theorem]{Lemma}
\newtheorem{proposition}[theorem]{Proposition}
\newtheorem{corollary}[theorem]{Corollary}
\theoremstyle{definition}
\newtheorem{definition}[theorem]{Definition}
\theoremstyle{remark}
\newtheorem{remark}[theorem]{Remark}
\numberwithin{equation}{section}
\begin{document}

\setcounter{page}{1}

\title[Composition operators and convexity of their Berezin range]{Composition operators, convexity of their Berezin range and related questions}
\author[Athul Augustine, M. Garayev \MakeLowercase{and} P. Shankar]{Athul Augustine, M. Garayev \MakeLowercase{and} P. Shankar}

\address{Athul Augustine, Department of Mathematics, Cochin University of Science And Technology,  Ernakulam, Kerala - 682022, India. }
\email{\textcolor[rgb]{0.00,0.00,0.84}{athulaugus@gmail.com, athulaugus@cusat.ac.in}}

\address{M. Garayev, Department of Mathematics, College of Science , King Saud University, P.OBox 2455Riyadh 11451, Saudi Arabia }
\email{\textcolor[rgb]{0.00,0.00,0.84}{mgarayev@ksu.edu.sa}}

\address{P. Shankar, Department of Mathematics, Cochin University of Science And Technology,  Ernakulam, Kerala - 682022, India.}
\email{\textcolor[rgb]{0.00,0.00,0.84}{shankarsupy@gmail.com, shankarsupy@cusat.ac.in}}

\subjclass[2020]{Primary 47B32 ; Secondary 52A10.}

\keywords{Berezin transform; Berezin range; Berezin set; Convexity; Composition operator; Hardy space; Bergman space; Berezin set mapping theorem}

%\date{\today
%\newline \indent $^{*}$Corresponding author}

\begin{abstract}
The Berezin range of a bounded operator $T$ acting on a reproducing kernel Hilbert space $\mathcal{H}$ is the set $\text{Ber}(T)$ := $\{\langle T\hat{k}_{x},\hat{k}_{x} \rangle_{\mathcal{H}} : x \in X\}$, where $\hat{k}_{x}$ is the normalized reproducing kernel for $\mathcal{H}$ at $x \in X$. In general, the Berezin range of an operator is not convex. In this paper, we discuss the convexity of range of the Berezin transforms. We characterize the convexity of the Berezin range for a class of composition operators acting on the Hardy space and the Bergman space of the unit disk. Also for so-called superquadratic functions, we prove the Berezin set mapping theorem for positive self-adjoint operators  $A$ on the reproducing kernel Hilbert space $\mathcal{H}(\Omega)$, namely we prove that $f(\mathrm{Ber}(\Phi(A)))=\mathrm{Ber}(\Phi(f(A)))$, where $\Phi:\mathcal{B}%
\left(  \mathcal{H}\left(  \Omega\right)  \right)  \mathcal{\rightarrow
}\mathcal{B}\left(  \mathcal{K(}Q\mathcal{)}\right)  $ is a normalized
positive linear map.

\end{abstract}
\maketitle

\section{Introduction}
The \textit{numerical range} of a bounded linear operator $A$ on a Hilbert space $\mathcal{H}$ is defined as
	$$W(A):= \{\langle Au,u\rangle : \|u\| = 1 \}.$$
By the Toeplitz-Hausdorff theorem \cite{toeplitz}
the numerical range of a linear operator on a Hilbert space is always convex.
	Given a set $X$, we say that $\mathcal{H}$ is a \textit{reproducing kernel Hilbert space} (RKHS) on $X$ over $\mathbb{C}$, if $\mathcal{H}$ is a vector subspace of the set of all functions from $X$ to $\mathbb{C}$, $\mathcal{H}$ endowed with an inner product, making it into a Hilbert space and for every $y\in X$, the linear evaluation functional, $E_{y}: \mathcal{H}\rightarrow \mathbb{C}$, defined by $E_{y} = f(y)$ is bounded. Throughout this paper, we work in the setting of reproducing kernel Hilbert spaces.

Let $\mathcal{H}$ be an RKHS, then by the Riesz representation theorem, there is a unique element $k_x \in \mathcal{H}$ such that $\langle f,k_x\rangle_{\mathcal{H}} = E_{x}(f) = f(x)$ for each $x \in X$ and all $f\in \mathcal{H}$. The element $k_x$ is called the  \textit{reproducing kernel} at $x$. We denote the \textit{normalized reproducing kernel} at $x$ as $\hat{k}_x = k_x/{\| k_x \|}_{\mathcal{H}}.$ From now onwards, $\mathcal{H}$ denotes a reproducing kernel Hilbert space on some set $X$. For more background about RKHS, we refer the reader to look at \cite{paulsen2016introduction}.

For a bounded linear operator $T$ acting on $\mathcal{H}$, the \textit{Berezin range} of $T$ is defined as 
$$\text{Ber}(T) := \{\langle T\hat{k}_{x},\hat{k}_{x} \rangle_{\mathcal{H}} : x \in X\},$$ 
where $\hat{k}_{x}$ is the normalized reproducing kernel for $\mathcal{H}$ at $x \in X.$ 

The Berezin transform was first introduced by Berezin in \cite{berezin1972covariant}. The Berezin transform plays a crucial role in operator theory. Many fundamental properties of basic operators are encrypted in the Berezin transforms. Karaev in \cite{karaev2006} formally introduced the Berezin set and Berezin number. 

The convexity of the Berezin range is the main focus of this paper. Given a bounded operator $T$ acting on an RKHS $\mathcal{H}$, is $\text{Ber}(T)$ convex ? By the Toeplitz-Hausdorff theorem, the numerical range of an operator is always convex \cite{toeplitz}. It is easy to observe that the Berezin range of an operator $T$ is always a subset of the numerical range of $T$. In general, the Berezin range of an operator need not be convex. Karaev  \cite{karaev2013reproducing} initiated the study of the geometry of the Berezin range. He \cite[Section 2.1]{karaev2013reproducing} showed that the Berezin range of the Model operator $M_{z^n}$ on the Model space is convex. This is the first result in the geometric or set-theoretic viewpoint of the Berezin range. This motivated Cowen and Felder to explore more about the convexity of the Berezin range.
Cowen and Felder \cite{cowen22} characterized the convexity of the Berezin range for matrices, multiplication operators on RKHS, and a class of composition operators acting on the Hardy space of the unit disc. Cowen and Felder raised several open questions that followed naturally from \cite{cowen22}. Here we address some of these questions.

This paper mainly focuses on the question \cite[Question 5.5]{cowen22}. Given a class of concrete operators acting on the Bergman space, what can be said about the convexity of the Berezin range of these operators? Karaev \cite{15} explored relation between the Berezin set $\mathrm{Ber}(T)$  of an operator $T$ and its spectrum $\sigma(T)$ for some concrete operators. The spectral mapping theorem of the form $\varphi
(\sigma(T))=\sigma(\varphi(T))$ is an important tool in studying many problems in operator theory. Another important question of our interest is as follows. Can we have analogous results of the spectral mapping theorem for Berezin sets?

The paper is organized as follows. In Section 2, we discuss basic notions and provide the necessary results. Section 3 proves that the Berezin range of an infinite dimensional matrix $A$ is convex if and only if $A$ has a constant diagonal. In Section 4, we characterize the convexity of the Berezin range of composition operators on the Hardy space for the general case of the elliptic symbol and a particular case of the automorphic symbol. In Section 5, we characterize the convexity of the Berezin range of composition operators on the Bergman space for the general case of elliptic symbol, a particular case of automorphic symbol, and the Blaschke factor. In Section 6, we prove the Berezin set mapping theorem for some self-adjoint operators on the reproducing kernel Hilbert space using superquadratic functions.

\section{Preliminaries}
A well-known example of an RKHS is the classical \textit{Hilbert Hardy space} \cite{paulsen2016introduction} on the unit disc $\mathbb{D}$,
$$H^{2}(\mathbb{D}) = \left\lbrace f(z) = \displaystyle\sum_{n\geq 0}a_{n}z^{n} \in \text{Hol}(\mathbb{D}) :\displaystyle\sum_{n\geq 0}|a_{n}|^{2} < \infty \right\rbrace$$
where Hol$(\mathbb{D})$ denotes the collection of holomorphic functions on $\mathbb{D}$. For $f(z)=\sum_{n\geq 0}a_{n}z^{n}$ and $g(z)=\sum_{n\geq 0}b_{n}z^{n}$ be the elements in $H^2$, then their inner product is defined as $\langle f,g \rangle = \sum_{n\geq 0}a_{n}\overline{b_n}$. $H^{2}(\mathbb{D})$ is a RKHS and the reproducing kernel for $H^{2}(\mathbb{D})$ is given by
 $$k_{w}(z) = \frac{1}{1 - \bar{w}z},\quad z,w \in \mathbb{D}.$$
 
 Another RKHS, we study here is the \textit{Bergman space} \cite{paulsen2016introduction} on the unit disc $\mathbb{D}$
 $$A^2(\mathbb{D}) = \left\lbrace f\in \text{Hol}(\mathbb{D}): \int_{\mathbb{D}} |f(z)|^2dV(z) < \infty\right\rbrace. $$
 where $dV$ is the normalized area measure on $\mathbb{D}$. The reproducing kernel for $A^2(\mathbb{D})$ is 
 $$ k_w(z) = \frac{1}{(1-\bar{w}z)^2},\quad z,w \in \mathbb{D}.$$

\begin{definition}
	Let $\mathcal{H}$ be an RKHS on a set $X$ and let $T$ be a bounded linear operator on $\mathcal{H}$.
	For $x \in X$,
	\begin{enumerate}
		\item The \textit{Berezin transform} of $T$ at x is
		\begin{center}
			$\widetilde{T}(x) := \langle T\hat{k}_{x},\hat{k}_{x} \rangle_{\mathcal{H}}$.
		\end{center}
		\item The \textit{Berezin range} of $T$ is
		\begin{center}
			$ \text{Ber}(T)$ := $\{\langle T\hat{k}_{x},\hat{k}_{x} \rangle_{\mathcal{H}} : x \in X\}$.
		\end{center}
		\item The \textit{Berezin radius} of $T$ (or \textit{Berezin number} of $T$) is 
		\begin{center}
			$\text{ber}(T) := \displaystyle\sup_{x \in X} |\widetilde{T}(x)|$.
		\end{center}
	\end{enumerate}
\end{definition}
Cowen and Felder \cite[Section 2]{cowen22} documented many interesting results regarding the Berezin transform. The Berezin transform is used to study the invertibility and compactness of many well-known operators on various RKHS. Before the Cowen and Felder paper \cite{cowen22}, there was no study about the Berezin range in a set-theoretic or geometric viewpoint other than the examples due to Karaev \cite[Section 2.1]{karaev2013reproducing}. This motivated Cowen and Felder to prove the following results on the convexity of the Berezin range. 

Let $\mathcal{H}$ be a Hilbert space of functions, then 
$$\text{Mult}(\mathcal{H}):= \{g\in \mathcal{H} : gf\in \mathcal{H}~~ \text{for all}~~ f \in \mathcal{H}\}.$$ 
For $g\in \text{Mult}(\mathcal{H}),$ we define the  multiplication operator $M_g$ on $\mathcal{H}$ by $M_gf=gf$.
\begin{theorem}\cite[Proposition 3.2]{cowen22} Let $\mathcal{H}$ be an RKHS on a set $X$ and $g\in \text{Mult}(\mathcal{H})$. Then the Berezin range of $M_g$ is convex if and only if $g(X)$ is convex.
\end{theorem}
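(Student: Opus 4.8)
The plan is to establish the stronger fact that $\text{Ber}(M_g) = g(X)$ as subsets of $\mathbb{C}$; the asserted equivalence is then immediate, since convexity of the one set is literally convexity of the other.

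First I would fix $x \in X$ and compute the Berezin transform of $M_g$ at $x$. Since $g \in \text{Mult}(\mathcal{H})$, the function $g k_x$ lies in $\mathcal{H}$, so the reproducing property applied to $g k_x$ gives
$$\langle M_g k_x, k_x\rangle_{\mathcal{H}} = \langle g k_x, k_x\rangle_{\mathcal{H}} = (g k_x)(x) = g(x)\, k_x(x) = g(x)\, \langle k_x, k_x\rangle_{\mathcal{H}} = g(x)\, \|k_x\|_{\mathcal{H}}^2.$$
Dividing through by $\|k_x\|_{\mathcal{H}}^2$ yields $\widetilde{M_g}(x) = \langle M_g \hat{k}_x, \hat{k}_x\rangle_{\mathcal{H}} = g(x)$. (Equivalently, one checks that $M_g^{\ast} k_x = \overline{g(x)}\, k_x$, i.e.\ $k_x$ is an eigenvector of $M_g^{\ast}$, and reaches the same conclusion.) Letting $x$ range over $X$ then gives $\text{Ber}(M_g) = \{\widetilde{M_g}(x) : x \in X\} = \{g(x) : x \in X\} = g(X)$, which proves the theorem.

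I do not expect any genuine obstacle here: the argument is essentially a one-line application of the reproducing property, and the real content is simply the observation that the Berezin transform of a multiplication operator coincides with its symbol. The only points I would flag with a brief remark are that $\hat{k}_x$ must be well defined --- which is part of the standing hypotheses on an RKHS, or else one restricts attention to the points where $k_x \neq 0$, which does not change $g(X)$ --- and that $M_g$ is bounded, which holds automatically for a multiplier by the closed graph theorem and is in any event implicit once $\text{Ber}(M_g)$ is under consideration.
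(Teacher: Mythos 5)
Your argument is correct and is essentially the standard one: the paper itself does not reprove this statement (it is quoted from Cowen--Felder, where the proof likewise rests on the identity $\widetilde{M_g}(x)=g(x)$, obtained from $M_g^{\ast}k_x=\overline{g(x)}\,k_x$ or, equivalently, from the reproducing property as you do), so that $\mathrm{Ber}(M_g)=g(X)$ and the equivalence is immediate. Your caveats about $k_x\neq 0$ and boundedness of $M_g$ are appropriate and consistent with the standing conventions for RKHSs used here.
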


Consider a complex-valued function $\phi : X \rightarrow X$ and a composition operator $C_\phi$ acting on a space of functions defined on $X$ by
$$C_\phi f := f \circ \phi.$$

\begin{theorem}\label{elli}\cite[Theorem 4.1]{cowen22} Let $\zeta \in \mathbb{T}$ and $\phi(z) = \zeta z$. Then the Berezin range of $C_\phi$ acting on $H^2$ is convex if and only if $\zeta=1$ or $\zeta = -1$.
\end{theorem}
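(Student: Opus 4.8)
The plan is to compute the Berezin transform of $C_\phi$ explicitly and then analyze the resulting planar curve. Since $C_\phi z^n=\zeta^n z^n$, the operator $C_\phi$ is diagonal with respect to the monomial orthonormal basis of $H^2$; using $\hat{k}_w=\sqrt{1-|w|^2}\,\sum_{n\geq 0}\bar{w}^n z^n$ and summing a geometric series (which converges since $|\zeta|\,|w|^2=|w|^2<1$) one obtains
\[
\widetilde{C_\phi}(w)=\langle C_\phi\hat{k}_w,\hat{k}_w\rangle=(1-|w|^2)\sum_{n\geq 0}\zeta^n|w|^{2n}=\frac{1-|w|^2}{1-\zeta|w|^2}.
\]
(Equivalently, one checks $C_\phi k_w=k_{\bar\zeta w}$ and evaluates $\langle k_{\bar\zeta w},k_w\rangle$.) Writing $t=|w|^2$, which ranges over $[0,1)$ as $w$ ranges over $\mathbb{D}$, this gives
\[
\mathrm{Ber}(C_\phi)=\Big\{\,g(t):=\tfrac{1-t}{1-\zeta t}\ :\ t\in[0,1)\,\Big\}.
\]

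Next I would dispose of the two special values. If $\zeta=1$ then $g\equiv 1$, so $\mathrm{Ber}(C_\phi)=\{1\}$ is a singleton, hence convex. If $\zeta=-1$ then $g(t)=\frac{1-t}{1+t}$ is a continuous strictly decreasing function of $t$ on $[0,1)$ with $g(0)=1$ and $g(t)\to 0$ as $t\to 1^-$, so $\mathrm{Ber}(C_\phi)=(0,1]$, an interval, hence convex.

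For $\zeta\in\mathbb{T}\setminus\{1,-1\}$ the idea is that $g$ is a non-degenerate M\"obius transformation, so it carries the extended real line $\mathbb{R}\cup\{\infty\}$ onto a circle $C$ in $\widehat{\mathbb{C}}$. One computes $g(0)=1$, $g(1)=0$, and $g(\infty)=\bar\zeta$; since $\bar\zeta\notin\mathbb{R}$ for such $\zeta$, these three points are not collinear, so $C$ is a genuine Euclidean circle and not a line. Because $[0,1)\subset\mathbb{R}$, we have $\mathrm{Ber}(C_\phi)=g([0,1))\subset C$. Since $g$ is injective, $\mathrm{Ber}(C_\phi)$ contains the two distinct points $g(0)=1$ and $g(1/2)=\frac{1}{2-\zeta}$; the straight segment joining two distinct points of a Euclidean circle meets that circle only at its two endpoints, so this segment is not contained in $C$, hence not in $\mathrm{Ber}(C_\phi)$. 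Therefore $\mathrm{Ber}(C_\phi)$ is not convex, which completes the equivalence.

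The routine parts are the geometric-series computation and the elementary monotonicity argument in the case $\zeta=-1$. The step requiring the most care is verifying that for $\zeta\neq\pm1$ the image $g(\mathbb{R}\cup\{\infty\})$ is a proper circle rather than a line — equivalently, that $0$, $1$, $\bar\zeta$ fail to be collinear — and then exploiting the fact that a Euclidean circle contains no nontrivial line segment; this is what forces non-convexity even though $\mathrm{Ber}(C_\phi)$ is merely a circular arc (missing its limit point $0$) rather than the full circle.
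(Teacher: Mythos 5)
Your computation of $\widetilde{C_\phi}(w)=\frac{1-|w|^2}{1-\zeta|w|^2}$ and your treatment of the cases $\zeta=\pm 1$ coincide with what the paper does (the paper states Theorem \ref{elli} as a quoted result and then derives it as the boundary case $|\alpha|=1$ of its Theorem \ref{zeta}, whose proof begins with exactly this formula and the same interval argument for $\alpha\in[-1,1]$). Where you genuinely diverge is the non-convexity direction. The paper argues that since $\widetilde{C_\phi}$ depends only on $r=|w|$, the Berezin range is a path; a convex path must be a point or a line segment; the segment would have to join $1$ to the limit point $0$ and hence lie on the real axis, forcing $\operatorname{Im}\alpha=0$. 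You instead observe that $g(t)=\frac{1-t}{1-\zeta t}$ is a nondegenerate M\"obius map (determinant $\zeta-1\neq0$), so $g(\mathbb{R}\cup\{\infty\})$ is a circle or line through $g(0)=1$, $g(1)=0$, $g(\infty)=\bar\zeta$, which is a genuine Euclidean circle precisely when $\bar\zeta\notin\mathbb{R}$, and a circle contains no nondegenerate segment. Both arguments are correct; yours is the more self-contained and airtight, since it avoids the slightly informal step ``a convex path is a point or a segment'' (which tacitly uses that a convex planar set with empty interior lies on a line) and replaces it with the elementary fact that a line meets a circle in at most two points. The paper's route, on the other hand, generalizes without change to all $\alpha\in\overline{\mathbb{D}}$ (where $g$ need not send $[0,1)$ into a circle in any useful way, though in fact the M\"obius argument also extends), which is what the authors exploit to prove the stronger Theorem \ref{zeta}.
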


\begin{theorem}\cite[Theorem 4.5]{cowen22} Let $\alpha \in \mathbb{D}$ and $\phi_\alpha = \frac{z-\alpha}{1-\overline{\alpha}z}$. Then the Berezin range of $C_\phi$ acting on $H^2$ is convex if and only if $\alpha=0.$
\end{theorem}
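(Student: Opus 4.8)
The plan is to prove the two implications separately, the first being immediate and the second resting on an explicit computation with the reproducing kernel of $H^2$. If $\alpha=0$ then $\phi_\alpha(z)=z$, so $C_{\phi_\alpha}$ is the identity operator, $\widetilde{C_{\phi_\alpha}}(w)=\langle\hat k_w,\hat k_w\rangle=1$ for every $w\in\mathbb D$, and $\text{Ber}(C_{\phi_\alpha})=\{1\}$, which is convex. The content is the converse.

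So suppose $\alpha\neq 0$. First I would reduce to the case $\alpha=a:=|\alpha|\in(0,1)$: writing $\alpha=ae^{i\gamma}$ and substituting $w=e^{i\gamma}w'$ one checks $\phi_\alpha(e^{i\gamma}w')=e^{i\gamma}\phi_a(w')$, and since $C_\phi^{*}k_w=k_{\phi(w)}$ gives the standard formula $\widetilde{C_\phi}(w)=\dfrac{1-|w|^2}{1-\bar w\,\phi(w)}$, it follows that $\widetilde{C_{\phi_\alpha}}(e^{i\gamma}w')=\widetilde{C_{\phi_a}}(w')$, hence $\text{Ber}(C_{\phi_\alpha})=\text{Ber}(C_{\phi_a})$. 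For $\phi_a(z)=\frac{z-a}{1-az}$ a short computation then yields, for $w=u+iv\in\mathbb D$ and $\rho:=1-|w|^2$,
$$\widetilde{C_{\phi_a}}(w)=\frac{\rho\,(1-aw)}{\rho-2ai v},$$
whose real and imaginary parts are $\operatorname{Re}\widetilde{C_{\phi_a}}(w)=\frac{\rho[(1-au)\rho+2a^2v^2]}{\rho^2+4a^2v^2}$ and $\operatorname{Im}\widetilde{C_{\phi_a}}(w)=\frac{\rho a v[2(1-au)-\rho]}{\rho^2+4a^2v^2}$. From these I would extract: (i) $\widetilde{C_{\phi_a}}(\bar w)=\overline{\widetilde{C_{\phi_a}}(w)}$, so $\text{Ber}(C_{\phi_a})$ is symmetric about $\mathbb R$; (ii) if $v\neq 0$ then $\operatorname{Im}\widetilde{C_{\phi_a}}(w)\neq 0$, since $2(1-au)=\rho$ would force $v^2=-(u-a)^2-(1-a^2)<0$, and combined with $\widetilde{C_{\phi_a}}(u)=1-au$ for real $u$ this gives $\text{Ber}(C_{\phi_a})\cap\mathbb R=(1-a,1+a)$; and (iii) $\operatorname{Re}\widetilde{C_{\phi_a}}(w)>0$ for every $w$.

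Finally I would produce a non-real point of $\text{Ber}(C_{\phi_a})$ whose real part lies outside $(1-a,1+a)$, by pushing $w$ toward the boundary fixed point $1$ of $\phi_a$. Taking $w_0=1-\varepsilon+i\varepsilon$ with $\varepsilon>0$ small, so $\rho=2\varepsilon(1-\varepsilon)$, the formula for $\operatorname{Re}\widetilde{C_{\phi_a}}$ gives $\operatorname{Re}\widetilde{C_{\phi_a}}(w_0)<1-a$ as soon as $\varepsilon(1-\varepsilon)\big((1-\varepsilon)+a\big)<a(1-a)$, which holds for all sufficiently small $\varepsilon$. Put $p:=\widetilde{C_{\phi_a}}(w_0)$. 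By (ii) $\operatorname{Im}p\neq 0$; by (i) $\bar p=\widetilde{C_{\phi_a}}(\overline{w_0})\in\text{Ber}(C_{\phi_a})$; and $\tfrac12(p+\bar p)=\operatorname{Re}p\in(0,1-a)$, which by (ii) and (iii) does not lie in $\text{Ber}(C_{\phi_a})$. Thus the midpoint of two elements of $\text{Ber}(C_{\phi_a})$ fails to belong to it, so $\text{Ber}(C_{\phi_\alpha})=\text{Ber}(C_{\phi_a})$ is not convex.

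The kernel identity and the algebra producing the formula for $\widetilde{C_{\phi_a}}$ are routine. The two steps requiring care are (ii) — excluding real Berezin values coming from non-real $w$ — and the boundary estimate showing $\operatorname{Re}\widetilde{C_{\phi_\alpha}}(w_0)<1-|\alpha|$ near the fixed point $e^{i\arg\alpha}$ of the automorphism, valid for every $\alpha\neq 0$; I expect this last estimate to be the main (though mild) obstacle, the rest being bookkeeping with the explicit formula.
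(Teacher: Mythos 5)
Your proof is correct. Note that the paper does not actually prove this statement; it is quoted from Cowen--Felder, and the paper's own closely related argument is the Bergman-space analogue for the Blaschke factor in Section 5.2. Your strategy is essentially that one: symmetry of the Berezin range under conjugation, the observation that $\operatorname{Im}\widetilde{C_{\phi_\alpha}}(w)=0$ forces $w$ to lie on the line through $0$ and $\alpha$ so that the real slice of the range is an explicit interval bounded away from $0$ (here $(1-a,1+a)$; there $((1-|\alpha|)^4,(1+|\alpha|)^4)$), and then a boundary point whose Berezin value has small real part, so that the midpoint of a conjugate pair escapes the range. The two genuine differences are improvements in bookkeeping rather than in substance: your preliminary rotation reducing to $\alpha=|\alpha|\in(0,1)$ (which the paper avoids by carrying $\operatorname{Im}\{\bar\alpha w\}$ throughout), and your use of the explicit test point $w_0=1-\varepsilon+i\varepsilon$ with the closed-form inequality $\varepsilon(1-\varepsilon)\bigl((1-\varepsilon)+a\bigr)<a(1-a)$, where the paper instead argues via the limit $\widetilde{C}_{\phi_\alpha}(\rho e^{i\theta})\to 0$ as $\rho\to 1^-$. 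All the computations you flag as routine check out, including the positivity of $2(1-au)-\rho=(u-a)^2+v^2+1-a^2$ underlying step (ii).
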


For the sake of completeness, here we discuss the first result about the convexity of the Berezin range by Karaev in  \cite{karaev2013reproducing}.

  Let $H^\infty(\mathbb{D})$ denote the set of bounded analytic functions on $\mathbb{D}$. A function $u \in H^\infty(\mathbb{D})$ is said to be inner if $|u(z)|=1$ a.e on $\mathbb{T}$. Suppose $\theta$ is an inner function. We define the corresponding Model space by the formula
\begin{center}
	$K_\theta:=H^2 \Theta \theta H^2$.
\end{center}
The Model operator $M_\theta$ on the Model space $K_\theta$ is defined as
\begin{center}
	$M_\theta := P_\theta S| K_\theta$
\end{center}
  where $S$ is the one-sided shift operator on $H^2,$ i.e., $Sf=zf$, $f\in H^2$ and $P_\theta$ is an orthogonal projection of $H^2$ onto $K_\theta$.
  The normalized reproducing kernel of the model space $K_\theta$ is the function

	$$\hat{k}_{\theta,\lambda}(z) =\left(\frac{1-|\lambda|^2}{1-|\theta(\lambda)|^2}\right)^{\frac{1}{2}}\frac{1-\overline{\theta(\lambda)}\theta(z)}{1-\overline{\lambda}z}.$$
  
For $\theta=z^n$, the Model space $K_{z^n}$ is
 $$K_{z^n} = H^2\Theta z^n H^2$$
and the corresponding Model operator $M_{z^n}$ is
$$M_{z^n}= P_{z^n}S| K_{z^n}.$$

\begin{theorem}\cite[Section 2.1]{karaev2013reproducing}
	The Berezin range of the Model operator $M_{z^n}$ is $\mathbb{D}_{\frac{n-1}{n}}$, which is always convex and $\text{ber}(M_{z^n}) = \frac{n-1}{n}.$
\end{theorem}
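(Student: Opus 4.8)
The plan is to reduce everything to an explicit computation on the model space $K_{z^{n}}=H^{2}\ominus z^{n}H^{2}$, on which $M_{z^{n}}$ is simply the truncated (Jordan) shift. First I would record that $\{1,z,\dots,z^{n-1}\}$ is an orthonormal basis of $K_{z^{n}}$ and that $M_{z^{n}}z^{k}=z^{k+1}$ for $0\le k\le n-2$ while $M_{z^{n}}z^{n-1}=P_{z^{n}}z^{n}=0$. Next I would write the reproducing kernel of $K_{z^{n}}$ at $\lambda\in\mathbb{D}$ directly from the stated formula with $\theta=z^{n}$:
\[
k_{z^{n},\lambda}(z)=\frac{1-\overline{\lambda}^{n}z^{n}}{1-\overline{\lambda}z}=\sum_{j=0}^{n-1}\overline{\lambda}^{j}z^{j},\qquad
\|k_{z^{n},\lambda}\|^{2}=\sum_{j=0}^{n-1}|\lambda|^{2j}=\frac{1-|\lambda|^{2n}}{1-|\lambda|^{2}} .
\]

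With the action of $M_{z^{n}}$ on the monomial basis in hand, the Berezin transform is a one-line computation:
\[
\widetilde{M_{z^{n}}}(\lambda)=\bigl\langle M_{z^{n}}\hat{k}_{z^{n},\lambda},\hat{k}_{z^{n},\lambda}\bigr\rangle
=\frac{1-|\lambda|^{2}}{1-|\lambda|^{2n}}\,\lambda\sum_{j=0}^{n-2}|\lambda|^{2j}
=\lambda\cdot\frac{1-|\lambda|^{2n-2}}{1-|\lambda|^{2n}} .
\]
The crucial observation is that $|\widetilde{M_{z^{n}}}(\lambda)|$ depends only on $r:=|\lambda|\in[0,1)$, while $\arg\lambda$ ranges freely over $\mathbb{R}$; hence $\mathrm{Ber}(M_{z^{n}})$ is a disk centered at the origin, and it only remains to compute its radius. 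Writing $g(r):=r\,\dfrac{1-r^{2n-2}}{1-r^{2n}}$, I would verify that $g$ is continuous on $[0,1)$ with $g(0)=0$, that $g(r)<\frac{n-1}{n}$ for every $r\in[0,1)$, and that $g(r)\to\frac{n-1}{n}$ as $r\to1^{-}$ (cancel the common factor $1-r^{2}$ in numerator and denominator). By the intermediate value theorem this forces $g\bigl([0,1)\bigr)=[0,\tfrac{n-1}{n})$, so that $\mathrm{Ber}(M_{z^{n}})=\mathbb{D}_{(n-1)/n}$ is the \emph{open} disk of radius $\frac{n-1}{n}$ --- in particular convex --- and $\mathrm{ber}(M_{z^{n}})=\sup_{r\in[0,1)}g(r)=\frac{n-1}{n}$.

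The one genuinely nontrivial point is the pointwise bound $g(r)<\frac{n-1}{n}$ on $[0,1)$. Clearing the (positive) denominators, this is equivalent to
\[
p(r):=(n-1)-nr+nr^{2n-1}-(n-1)r^{2n}>0\qquad(0\le r<1).
\]
I would prove this by analyzing $p$ near $r=1$: one has $p(1)=0$, and $p'(r)=n\bigl(-1+(2n-1)r^{2n-2}-2(n-1)r^{2n-1}\bigr)$ satisfies $p'(1)=0$ with $p''(r)=2n(n-1)(2n-1)r^{2n-3}(1-r)>0$ on $(0,1)$; hence $p'$ is strictly increasing on $[0,1]$, so $p'<0$ on $[0,1)$, so $p$ is strictly decreasing on $[0,1]$, and $p(1)=0$ gives $p>0$ on $[0,1)$. (The degenerate case $n=1$ is immediate, since then $K_{z}=\mathbb{C}$, $M_{z}=0$, and both assertions read $\{0\}$ and $0$.) The main obstacle, then, is precisely this elementary but slightly delicate polynomial estimate together with the observation that the supremum $\frac{n-1}{n}$ is approached but never attained, which is what makes the Berezin range the open disk.
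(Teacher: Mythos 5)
Your proof is correct: the identification of $M_{z^n}$ as the truncated shift on the monomial basis, the kernel computation giving $\widetilde{M_{z^n}}(\lambda)=\lambda\,\frac{1-|\lambda|^{2n-2}}{1-|\lambda|^{2n}}$, and the polynomial estimate $p(r)>0$ on $[0,1)$ (via $p(1)=p'(1)=0$ and $p''>0$) all check out, and together they do yield $\mathrm{Ber}(M_{z^n})=\mathbb{D}_{(n-1)/n}$ and $\mathrm{ber}(M_{z^n})=\frac{n-1}{n}$. The paper states this result only with a citation to Karaev and gives no proof of its own, but your argument is the standard direct computation one would expect there, so nothing further to compare.
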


\section{Infinite dimensional matrices}

  	The numerical range is invariant under unitary equivalence, but this is not the case for the Berezin range. Two matrices that are unitarily equivalent may not have the same Berezin range.

For
  		$A = \begin{bmatrix}
  		1 & 0\\
  		0 & 2
  	\end{bmatrix}$ and $C = \frac{1}{2} \begin{bmatrix}
  		3 & 1\\
  		1 & 3
  	\end{bmatrix}$ are unitarily equivalent matrices with respect to the unitary matrix $U = \frac{1}{\sqrt{2}} \begin{bmatrix}
  		1 & -1\\
  		1 & 1
  	\end{bmatrix}$. It is easy to observe that the Berezin range of A and C are not equal ($\text{Ber}(A) = \{1,2\}$ and $\text{Ber}(C) = \{\frac{3}{2}\}$). 
  	
Since the Berezin range is not invariant under unitary equivalence, many fundamental properties of the numerical range are not carrying forward to the Berezin range.

For a finite-dimensional $n \times n$ matrix $A$ with complex entries, under the standard inner product for $\mathbb{C}^n$, the Berezin range of $A$ is convex if and only if $A$ has constant diagonal \cite{cowen22}. Now we extend this result to infinite matrices.
  
We consider $\mathnormal{l}^2$ as the set of all functions mapping $X \rightarrow \mathbb{C}$ by $v(j) = v_{j}$ and $X = \{1,2,......\}$. Then $\mathnormal{l}^2$ is a RKHS with kernel $k_{j} = e_{j}$, the $j^{th}$ standard basis vector and $k_{j} = \hat{k}_{j}$ for each j =1,2....
  For any complex matrix $A=(a_{jk})_{j,k=1}^\infty$, we have
  \begin{center}
  	$B(A) = \{\langle Ae_j,e_j\rangle : j = 1,2,..\}$\\
  	$B(A)= \{a_{jj} : j=1,2..\}$\\
  	
  \end{center}
  So the Berezin range of the complex matrix $A$ is the collection of diagonal elements of $A$. It follows that $B(A)$ is convex if and only if the diagonal elements of matrix $A$ are all equal.
  \begin{proposition}
  	Let $A=(a_{jk})_{j,k=1}^\infty$ be an infinite matrix with complex entries. Under the standard inner product for $\mathnormal{l}^2$, the Berezin range of $A$ is convex if and only if $A$ has a constant diagonal.
  \end{proposition}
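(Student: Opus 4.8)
The plan is to reduce the statement to the elementary fact that a nonempty at-most-countable convex subset of the plane must be a single point. First I would make explicit the reproducing kernel structure of $l^2$ already indicated above: identifying $v\in l^2$ with the function $j\mapsto v_j$ on $X=\{1,2,\dots\}$, the evaluation functional at $j$ is $v\mapsto v_j=\langle v,e_j\rangle$, so the reproducing kernel at $j$ is $k_j=e_j$, and since $\|e_j\|=1$ it is already normalized, i.e. $\hat{k}_j=e_j$. Hence for $A=(a_{jk})_{j,k=1}^{\infty}$,
$$\text{Ber}(A)=\{\langle A\hat{k}_j,\hat{k}_j\rangle : j\geq 1\}=\{\langle Ae_j,e_j\rangle : j\geq 1\}=\{a_{jj} : j\geq 1\},$$
so the Berezin range is precisely the set of diagonal entries of $A$.

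Next I would prove the ``only if'' direction. Assume $\text{Ber}(A)$ is convex. Since $A$ is an infinite matrix, $\text{Ber}(A)$ is a nonempty subset of $\mathbb{C}$ that is at most countable, being indexed by $j\in\mathbb{N}$. If it contained two distinct points $a_{ii}\neq a_{jj}$, convexity would force the whole segment $\{t\,a_{ii}+(1-t)\,a_{jj} : t\in[0,1]\}$ to lie inside $\text{Ber}(A)$; but that segment is uncountable, contradicting countability. Therefore $\text{Ber}(A)$ is a singleton, which means all diagonal entries $a_{jj}$ coincide, i.e. $A$ has constant diagonal.

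The ``if'' direction is immediate: if $a_{jj}=c$ for every $j$, then $\text{Ber}(A)=\{c\}$, a single point, which is trivially convex.

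I do not anticipate a genuine obstacle; the only point needing any care is the observation that the line segment joining two distinct points of $\mathbb{C}$ is uncountable, which is exactly what forbids a convex Berezin range with more than one element. Everything else is a direct computation resting on the fact that the standard basis vectors are the (already normalized) reproducing kernels of $l^2$.
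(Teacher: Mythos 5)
Your proposal is correct and follows essentially the same route as the paper: both identify $\mathrm{Ber}(A)$ with the set of diagonal entries $\{a_{jj}\}$ via $\hat{k}_j=e_j$ and conclude that convexity forces a singleton. The only difference is that you make explicit the countability argument (a convex set with two distinct points contains an uncountable segment), which the paper leaves as an unstated ``it follows that''; this is a welcome clarification but not a different proof.
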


\section{ Compositon operator on Hardy space}
 
\subsection{Elliptic symbol}
Cowen and Felder \cite{cowen22} characterized the convexity of the Berezin range of the composition operator $C_\phi$ where $\phi(z) = \zeta z$, $\zeta\in \mathbb{T}$ and $z \in \mathbb{D}$ on the RKHS $H^2(\mathbb{D})$. Here we characterize a more general composition operator.

For $\alpha \in \mathbb{\overline{D}}$ and $z\in\mathbb{D}$, let $\phi(z) = \alpha z$. Acting on $H^2$, we have
  \begin{center}
  	\begin{equation*}
  		\begin{split}
  			\widetilde{C_{\phi}}(z) &=\langle C_{\phi}\hat{k}_{z},\hat{k}_{z}\rangle\\
  			&=(1-|z|^2)\langle C_{\phi}{k_{z}},{k_{z}}\rangle\\
  			&=(1-|z|^2)k_{z}(\phi(z))\\
  			&=\frac{1 - |z|^2}{1 - |z|^{2}\alpha}.
  		\end{split}
  	\end{equation*}
  \end{center}
  The Berezin range of these operators is not always convex, as we will see in the example below. Here we try to find the values of $\alpha$ for which $\text{Ber}(C_\phi)$ is convex.
  \begin{figure}[h]
  	\caption{$\text{Ber}(C_\phi)$ on $H^2$ for $\alpha=-0.5$ (left, apparently convex) and $\alpha=0.25+0.25i$(right, apparently not convex).}
  	\includegraphics[scale=2]{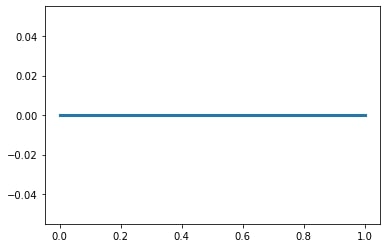}
  	\includegraphics[scale=2]{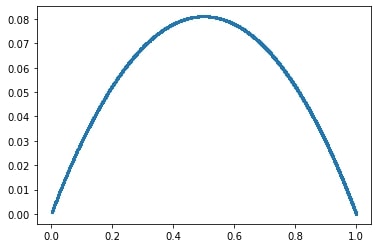}
  \end{figure}

\begin{theorem}\label{zeta}
Let $\alpha\in \overline{\mathbb{D}}$ and $\phi(z) = \alpha z$. Then the Berezin range of $C_\phi$ acting on $H^2$ is convex if and only if $-1\leq \alpha \leq 1.$
\end{theorem}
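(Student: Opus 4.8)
The plan is to use the explicit formula for the Berezin transform already computed above, $\widetilde{C_\phi}(z)=\dfrac{1-|z|^2}{1-|z|^2\alpha}$, which depends on $z$ only through $r:=|z|^2$; as $z$ runs over $\mathbb{D}$ the parameter $r$ runs over $[0,1)$, and $1-r\alpha\neq0$ there since $|r\alpha|\le r<1$. Thus the problem reduces entirely to the planar geometry of the curve
$$\mathrm{Ber}(C_\phi)=\{\, g(r):=\tfrac{1-r}{1-r\alpha}\ :\ r\in[0,1)\,\}.$$

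For the ``if'' direction, suppose $-1\le\alpha\le1$; since $\alpha\in\overline{\mathbb{D}}$ this merely says $\alpha$ is real. Then $g$ maps $[0,1)$ continuously into $\mathbb{R}$, so $\mathrm{Ber}(C_\phi)$ is a connected subset of $\mathbb{R}$, i.e.\ an interval, hence convex. (From $g'(r)=\frac{\alpha-1}{(1-r\alpha)^2}$ one sees the interval is $\{1\}$ for $\alpha=1$ and $(0,1]$ for $-1\le\alpha<1$, but this refinement is not needed.)

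The content is in the ``only if'' direction. Assume $\alpha\notin[-1,1]$; since $\alpha\in\overline{\mathbb{D}}$ this forces $\operatorname{Im}\alpha\neq0$, in particular $\alpha\neq0,1$. The key observation is that $g(r)=\frac{-r+1}{-\alpha r+1}$ is a M\"obius transformation of the variable $r$ (coefficient determinant $\alpha-1\neq0$), and a M\"obius transformation carries the extended real line $\widehat{\mathbb{R}}$ onto a line or a circle in $\widehat{\mathbb{C}}$. Evaluating at $0,1,\infty$ gives the image points $g(0)=1$, $g(1)=0$, $g(\infty)=1/\alpha$; since $1/\alpha\notin\mathbb{R}$ while $0,1\in\mathbb{R}$, these are not collinear, so $g(\widehat{\mathbb{R}})$ is a genuine circle $\mathcal{C}$, not a line. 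Hence $\mathrm{Ber}(C_\phi)=g([0,1))$ is an arc of $\mathcal{C}$, and it is nondegenerate: $g$ is injective on $\widehat{\mathbb{R}}$ with $g(0)=1$, whereas $g(r)=1$ for some $0<r<1$ would force $\alpha=1$. Finally, any two distinct points of a circle are joined by a chord whose open part lies in the bounded component of the complement of $\mathcal{C}$, hence misses $\mathcal{C}$; so the midpoint of two distinct points of $\mathrm{Ber}(C_\phi)\subseteq\mathcal{C}$ is not in $\mathrm{Ber}(C_\phi)$, and $\mathrm{Ber}(C_\phi)$ is not convex.

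I do not expect a genuine obstacle. The only delicate point is confirming that $g([0,1))$ is a nondegenerate arc of an \emph{honest} circle, as opposed to a single point or a line segment; this is precisely where $\operatorname{Im}\alpha\neq0$ enters, and the three-point evaluation at $0,1,\infty$ settles it. A more computational route is also available — writing $\alpha=a+bi$, noting that $\operatorname{Im}g(r)$ has constant sign on $(0,1)$ while a chord from $g(0)=1$ toward the limit point $\lim_{r\to1^-}g(r)=0$ must cross the real axis — but it is messier, and the M\"obius argument is cleaner; it also recovers Theorem~\ref{elli} as the special case $|\alpha|=1$.
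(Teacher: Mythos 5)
Your proof is correct, and the ``only if'' half is genuinely different from the paper's. The paper argues directly: since $\widetilde{C_\phi}$ depends only on $|z|$, the Berezin range is a path, a convex path must be a point or a line segment, and since the path runs from $1$ to the limit point $0$ that segment must lie on the real axis, forcing $\operatorname{Im}\alpha=0$. You instead argue by contraposition through the observation that $r\mapsto\frac{1-r}{1-\alpha r}$ is a M\"obius transformation, so for non-real $\alpha$ the range is a nondegenerate arc of an honest circle (the three-point test at $0,1,\infty$ ruling out a line), and no such arc is convex. Your route is somewhat tighter: the paper leaves implicit both why a convex path must be a point or segment and why vanishing of $\operatorname{Im}\widetilde{C_\phi}$ forces $\operatorname{Im}\alpha=0$, whereas your circle argument needs neither claim and additionally explains the circular shapes in the paper's figures and recovers Theorem~\ref{elli} as the case $|\alpha|=1$. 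The paper's approach, on the other hand, is more elementary (no appeal to the circline-preserving property of M\"obius maps) and identifies the limit point $0$ explicitly, which is reused verbatim in the Bergman-space analogue. The ``if'' direction is essentially identical in both.
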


\begin{proof}
	Let $\alpha$ $\in$ $\overline{\mathbb{D}}$ and $z\in\mathbb{D}$. Put $z=re^{i\theta}$ with $0\leq r < 1$. Then,
	\begin{center}
		\begin{equation*}
			\begin{split}
				\widetilde{C_{\phi}}(z) &=\langle C_{\phi}\hat{k}_{z},\hat{k}_{z}\rangle\\
				&=\frac{1 - |z|^2}{1 - |z|^{2}\alpha}\\
				&=\frac{1 - r^2}{1 - r^{2}\alpha}
			\end{split}
		\end{equation*}

	\end{center}

If $\alpha = 1$ then 
\begin{center}
	$\widetilde{C_{\phi}}(re^{i\theta}) =\frac{1 - r^2}{1 - r^{2}\alpha} = 1$.
\end{center}
 So $\text{Ber}(C_\phi) = \{1\}$, which is convex. Similarly if $-1\leq \alpha < 1$, we have
 \begin{center}
 	$\widetilde{C_{\phi}}(re^{i\theta}) =\frac{1 - r^2}{1 - r^{2}\alpha}$.\\
 \end{center}
  So $\text{Ber}(C_\phi) = \{\frac{1 - r^2}{1 - r^{2}\alpha} : r\in[0,1)\} = (0,1]$ which is also convex.

	Conversely, suppose that $\text{Ber}(C_\phi)$ is convex. We have
	\begin{center}
		$\widetilde{C_{\phi}}(re^{i\theta}) =\frac{1 - r^2}{1 - r^{2}\alpha}.$
	\end{center} 
Here $\widetilde{C_{\phi}}$ is a function which is independent of $\theta$. Therefore $\text{Ber}(C_\phi)$ is a path in $\mathbb{C}$.  So if $\text{Ber}(C_\phi)$ is convex, it must be either a point or a line segment. It is easy to observe that $\text{Ber}(C_\phi)$ is a point if and only if $\alpha = 1$, so assume $\text{Ber}(C_\phi)$ is a line segment. Note that $\widetilde{C_{\phi}}(0) = 1$ and that $\lim_{r\rightarrow 1^-}\widetilde{C_{\phi}}(re^{i\theta}) = 0$. This tells us that $\text{Ber}(C_{\phi})$ must be a line segment passing through the point 1 and approaching the origin. Consequently, we must have the imaginary part of $\text{Ber}(C_{\phi})$ to be zero, which can happen if and only if the imaginary part of $\alpha$ is zero. Since $\alpha \in \overline{\mathbb{D}}$, we have $-1 \leq \alpha \leq 1$. 
 So if $\text{Ber}(C_\phi)$ is convex then $-1 \leq \alpha \leq 1$.
\end{proof}

It is easy to observe that when $\alpha \in \mathbb{T}$ and $\phi(z)=\alpha z$, the Berezin range of $C_\phi$ acting on $H^2$ is convex if and only if $ \alpha = 1$ or $-1$. So Theorem \ref{elli} can be considered a corollary to the above theorem.

Every automorphism $\phi$ of the unit disc $\mathbb{D}$ is	of the form $\phi(z) = \frac{az+b}{\overline{b}z+\bar{a}}$ where $a,b\in \mathbb{C}$ and $|a|^2 - |b|^2=1$.
For $a,b \in \mathbb{C}$ and $z \in \mathbb{D}$, consider the composition operator $C_{\phi}$ acting on $H^2$. We have
\begin{center}
	\begin{equation*}
		\begin{split}
			\widetilde{C_{\phi}}(z) &=\langle C_{\phi}\hat{k}_{z},\hat{k}_{z}\rangle\\
			&=(1-|z|^2)\langle C_{\phi}{k_{z}},{k_{z}}\rangle\\
			&=(1-|z|^2)k_{z}(\phi(z))\\
			&=\frac{1 - |z|^2}{1 - \bar{z}\phi(z)}
		\end{split}
	\end{equation*}
\end{center}
The Berezin range of these operators is not always convex, as we will see in the example below. Here we try to find the values for which $\text{Ber}(C_\phi)$ is convex. The following remark is a consequence of Theorem \ref{elli}.
\begin{figure}[h]
	\caption{$\text{Ber}(C_\phi)$ on $H^2$ for $\phi(z) = \frac{az+b}{\overline{b}z+\bar{a}}$ with $a=i$ and $b=0$ (left, apparently convex) and $\phi(z) = \frac{az+b}{\overline{b}z+\bar{a}}$ with $a=e^{\frac{i\pi}{12}}$ and $b=0$(right, apparently not convex).}
	\includegraphics[scale=2]{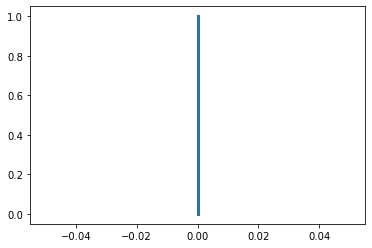}
	\includegraphics[scale=0.48]{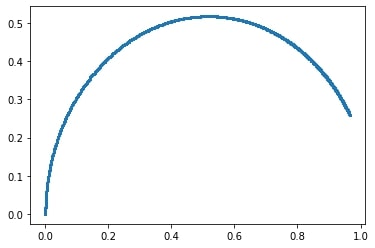}
\end{figure}

  \begin{remark}
  	For $b=0$, the Berezin range of the composition operator $C_\phi$, $\phi(z) = \frac{az+b}{\overline{b}z+\bar{a}}$ where $a,b\in \mathbb{C}$ and $|a|^2 - |b|^2=1$  is convex if and only if $a=1,-1,i,-i$.
  \end{remark}

\section{Composition operator on Bergman space}
Let $\phi$ be a complex-valued function $\phi : \mathbb{D}\longrightarrow \mathbb{D}$. The Berezin transform of a composition operator $C_\phi$ acting  on the Bergman space $A^2(\mathbb{D})$ is as follows:
\begin{center}
  	\begin{equation*}
  		\begin{split}
  			\widetilde{C_{\phi}}(z) &=\langle C_{\phi}\hat{k}_{z},\hat{k}_{z}\rangle\\
  			&=(1-|z|^2)^2\langle C_{\phi}{k_{z}},{k_{z}}\rangle\\
  			&=(1-|z|^2)^2k_{z}(\phi(z))\\
  			&=\frac{(1 - |z|^2)^2}{(1 - \overline{z}\phi(z))^2}.
  		\end{split}
  	\end{equation*}
  \end{center}
 \subsection{Elliptic Symbol}
For $\alpha \in \overline{\mathbb{D}}$ and $z\in\mathbb{D}$, let
 $\phi(z) = \alpha z$
  and consider the composition operator
$C_{\phi}f = f\circ \phi$
  acting on $A^2(\mathbb{D})$. We have
  \begin{center}
  	\begin{equation*}
  		\begin{split}
  			\widetilde{C_{\phi}}(z) &=\langle C_{\phi}\hat{k}_{z},\hat{k}_{z}\rangle\\
  			&=\frac{(1 - |z|^2)^2}{(1 - \overline{z}\phi(z))^2}\\
  			&=\frac{(1 - |z|^2)^2}{(1 - |z|^2\alpha)^2}.
  		\end{split}
  	\end{equation*}
  \end{center}
  The Berezin range of these operators is not always convex, as we will see in the example below. Here we try to find the values of $\alpha$ for which $\text{Ber}(C_\phi)$ is convex.

  \begin{figure}[h]
  	\caption{$\text{Ber}(C_\phi)$ on $H^2$ for $\alpha=-1$ (left, apparently convex) and $\alpha=i$(right, apparently not convex).}
  	\includegraphics[scale=2]{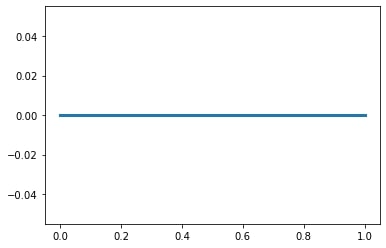}
  	\includegraphics[scale=2]{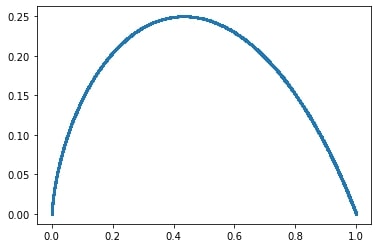}
  \end{figure}
\newpage
\begin{theorem}\label{elliptic}
Let $\alpha\in \overline{\mathbb{D}}$ and $\phi(z) = \alpha z$. Then the Berezin range of $C_\phi$ acting on $A^2(\mathbb{D})$ is convex if and only if $-1\leq\alpha\leq 1.$
\end{theorem}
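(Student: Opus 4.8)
The plan is to mimic the proof of Theorem~\ref{zeta}. The starting point is the formula already recorded above,
$\widetilde{C_{\phi}}(z)=\left(\frac{1-|z|^{2}}{1-|z|^{2}\alpha}\right)^{2}$,
which shows that $\widetilde{C_{\phi}}(z)$ depends on $z$ only through $r=|z|\in[0,1)$. Hence $\mathrm{Ber}(C_{\phi})$ is the image of the real-analytic arc $r\mapsto h(r^{2})$, where $h(t)=\bigl(\frac{1-t}{1-\alpha t}\bigr)^{2}$ for $t\in[0,1)$ (the denominator does not vanish there, so $h$ is genuinely real-analytic). A convex subset of $\mathbb{C}$ that is the image of such an arc has empty interior — a real-analytic arc is Lebesgue-null, so it cannot contain a disk — hence is either a single point or is contained in a line. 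This is the dichotomy I will exploit.

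For the ``if'' direction: if $\alpha=1$ then $h\equiv 1$, so $\mathrm{Ber}(C_{\phi})=\{1\}$ is convex. If $-1\le\alpha<1$, then on $[0,1)$ the function $g(t):=\frac{1-t}{1-\alpha t}$ is positive (since $1-\alpha t>0$), satisfies $g(0)=1$ and $g(t)\to 0$ as $t\to 1^{-}$, and has derivative $g'(t)=\frac{\alpha-1}{(1-\alpha t)^{2}}<0$; thus $g$ decreases strictly from $1$ to $0$ and $h=g^{2}$ maps $[0,1)$ onto $(0,1]$. So $\mathrm{Ber}(C_{\phi})=(0,1]$ is convex.

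For the ``only if'' direction, suppose $\mathrm{Ber}(C_{\phi})$ is convex. From the formula for $g'$, the arc $h$ is constant precisely when $\alpha=1$, which is the single-point case. Otherwise $\mathrm{Ber}(C_{\phi})$ lies on a line $M$; since $h(0)=1\in M$ and $h$ is differentiable at $0$ with $h'(0)=2g(0)g'(0)=2(\alpha-1)\neq 0$, the line $M$ must have direction $\alpha-1$, whence $h(t)-1\in\mathbb{R}(\alpha-1)$ for all $t\in[0,1)$. Using the factorisation $h(t)-1=\frac{t(\alpha-1)\bigl(2-(1+\alpha)t\bigr)}{(1-\alpha t)^{2}}$, this says $\frac{t\,(2-(1+\alpha)t)}{(1-\alpha t)^{2}}\in\mathbb{R}$ for all $t\in[0,1)$; multiplying numerator and denominator by $(1-\overline{\alpha}t)^{2}$ makes the denominator the positive real number $|1-\alpha t|^{4}$, so (after dividing by $t>0$) the condition becomes $\operatorname{Im}\bigl[(2-(1+\alpha)t)(1-\overline{\alpha}t)^{2}\bigr]=0$ for all $t\in(0,1)$. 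Expanding the bracket as a polynomial in $t$, the coefficient of $t$ is $-4\overline{\alpha}-(1+\alpha)$, whose imaginary part equals $3\operatorname{Im}\alpha$; since a polynomial vanishing on an interval has all coefficients zero, we get $\operatorname{Im}\alpha=0$, and as $\alpha\in\overline{\mathbb{D}}$ this forces $-1\le\alpha\le 1$.

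The main obstacle is the geometric step in the converse: passing from ``$\mathrm{Ber}(C_{\phi})$ is convex'' to ``the arc $h$ lies on its tangent line at $t=0$''. The soft argument of Theorem~\ref{zeta} — a segment through $1$ approaching $0$ must lie on the real axis — transfers to this setting, but the route above (empty interior, then the direction of $M$ forced by $h'(0)$, then one explicit coefficient computation) isolates cleanly the single obstruction $\operatorname{Im}\alpha=0$, which is exactly what separates the convex case from the non-convex one. Everything else is the same bookkeeping as in the Hardy-space proof, only with the extra square.
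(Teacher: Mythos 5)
Your proof is correct, and its skeleton is the same as the paper's: reduce $\widetilde{C_{\phi}}$ to a function of $r=|z|$ alone, get $(0,1]$ or $\{1\}$ in the forward direction, and in the converse use the ``point or line segment'' dichotomy to force $\operatorname{Im}\alpha=0$. The differences are in how the converse is executed, and in both places your version is tighter. First, the paper justifies the dichotomy only by saying that $\mathrm{Ber}(C_\phi)$ is ``a path,'' whereas you observe that the image of a real-analytic arc is Lebesgue-null, hence a convex such set has empty interior and lies on a line --- a genuinely rigorous version of that step. Second, the paper identifies the line as the real axis by using the two boundary values $\widetilde{C_\phi}(0)=1$ and $\lim_{r\to 1^-}\widetilde{C_\phi}(re^{i\theta})=0$, and then simply asserts that $\operatorname{Im}\widetilde{C_\phi}\equiv 0$ holds if and only if $\operatorname{Im}\alpha=0$; you instead never use the limit at the boundary, but pin the line's direction down via the tangent $h'(0)=2(\alpha-1)$ and then extract $\operatorname{Im}\alpha=0$ from the vanishing of the coefficient of $t$ in $\operatorname{Im}\bigl[(2-(1+\alpha)t)(1-\overline{\alpha}t)^{2}\bigr]$. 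Your route costs one explicit expansion but buys a self-contained verification of the step the paper leaves as an assertion; the paper's route is softer and shorter. Both are valid, and your computations ($h(t)-1=\tfrac{t(\alpha-1)(2-(1+\alpha)t)}{(1-\alpha t)^{2}}$, imaginary part of the linear coefficient equal to $3\operatorname{Im}\alpha$) check out.
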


\begin{proof}
	Let $\alpha$ $\in$ $\overline{\mathbb{D}}$ and $z\in\mathbb{D}$. Put $z=re^{i\theta}$ with $0\leq r < 1$. Then,
	\begin{center}
		\begin{equation*}
			\begin{split}
				\widetilde{C_{\phi}}(z) &=\langle C_{\phi}\hat{k}_{z},\hat{k}_{z}\rangle\\
				&=\frac{(1 - |z|^2)^2}{(1 - |z|^2\alpha)^2}\\
				&=\frac{(1 - r^2)^2}{(1 - r^2\alpha)^2}.
			\end{split}
		\end{equation*}

	\end{center}

If $\alpha = 1$ then 

	$$\widetilde{C_{\phi}}(re^{i\theta}) =\frac{(1 - r^2)^2}{(1 - r^2\alpha)^2} = 1.$$

 So $\text{Ber}(C_\phi) = \{1\}$, which is convex. Similarly if $\alpha \in [-1,1)$, we have
 
 	$$\widetilde{C_{\phi}}(re^{i\theta}) =\frac{(1 - r^2)^2}{(1 - r^2\alpha)^2}.$$
 
  So $\text{Ber}(C_\phi) = \{\frac{(1 - r^2)^2}{(1 - r^2\alpha)^2} : r\in[0,1)\} = (0,1]$ which is also convex.

	Conversely, suppose that $\text{Ber}(C_\phi)$ is convex. We have
	\begin{center}
		$\widetilde{C_{\phi}}(re^{i\theta}) =\frac{(1 - r^2)^2}{(1 - r^2\alpha)^2}$
	\end{center} 
which is a function independent of $\theta$. So if $\text{Ber}(C_\phi)$ is convex, it must be either a point or a line segment. It is immediate that $\text{Ber}(C_\phi)$ is a point if and only if $\alpha = 1$, so let us assume $\text{Ber}(C_\phi)$ is a line segment. Note that $\widetilde{C_{\phi}}(0) = 1$ and that $\lim_{r\rightarrow 1^-}\widetilde{C_{\phi}}(re^{i\theta}) = 0$. This tells us that $\text{Ber}(C_{\phi})$ must be a line segment passing through the point 1 and approaching the origin. Consequently, we must have the imaginary part of $\text{Ber}(C_{\phi})$ to be zero, which can happen if and only if the imaginary part of $\alpha$ is zero. Since $\alpha \in \overline{\mathbb{D}}$, we have $-1\leq \alpha \leq 1$.\\
\end{proof}

Now for $\alpha \in \mathbb{T}$ and $z\in\mathbb{D}$, consider the elliptic symbol 
  	$\phi(z) = \alpha z$ and the composition operator $C_{\phi}f = f\circ \phi.$
  Acting on $A^2(\mathbb{D})$, we have
  \begin{center}
  	\begin{equation*}
  		\begin{split}
  			\widetilde{C_{\phi}}(z) &=\langle C_{\phi}\hat{k}_{z},\hat{k}_{z}\rangle\\
  			&=\frac{(1 - |z|^2)^2}{(1 - |z|^2\alpha)^2}.
  		\end{split}
  	\end{equation*}
  \end{center}
  
\begin{corollary}
	The Berezin range of $C_\phi$ acting on $A^2(\mathbb{D})$ is convex if and only if $\alpha =1$ or $-1.$
\end{corollary}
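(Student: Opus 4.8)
The plan is to derive this corollary directly from Theorem \ref{elliptic} together with the same ``path'' argument used in its proof. First I would observe that the case $\alpha \in \mathbb{T}$ is a restriction of the situation already analyzed in Theorem \ref{elliptic}: for $\alpha \in \mathbb{T}$ we still have $\widetilde{C_{\phi}}(re^{i\theta}) = \frac{(1-r^2)^2}{(1-r^2\alpha)^2}$, a function of $r \in [0,1)$ alone. Hence $\text{Ber}(C_\phi)$ is a path in $\mathbb{C}$, so if it is convex it is a point or a line segment.

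Next I would handle the two directions. For the ``if'' direction: if $\alpha = 1$, then $\widetilde{C_\phi} \equiv 1$ and $\text{Ber}(C_\phi) = \{1\}$ is convex; if $\alpha = -1$, then $\widetilde{C_\phi}(re^{i\theta}) = \frac{(1-r^2)^2}{(1+r^2)^2}$ is real and, as $r$ ranges over $[0,1)$, traces the interval $(0,1]$, which is convex. For the ``only if'' direction, I would invoke Theorem \ref{elliptic}: since $\mathbb{T} \subset \overline{\mathbb{D}}$, convexity of $\text{Ber}(C_\phi)$ forces $-1 \le \alpha \le 1$; intersecting this with the constraint $\alpha \in \mathbb{T}$ leaves only $\alpha = 1$ or $\alpha = -1$.

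I would write this up compactly, essentially in two or three lines, since all the analytic content is already contained in Theorem \ref{elliptic}. The only thing to be slightly careful about is making explicit that $\alpha \in \mathbb{T}$ together with $-1 \le \alpha \le 1$ (the latter meaning $\alpha$ is real and in $[-1,1]$) forces $|\alpha| = 1$ and $\alpha$ real, hence $\alpha \in \{1, -1\}$; there is no genuine obstacle here, just bookkeeping. No new estimates or constructions are needed, so there is no ``hard part'' beyond citing the theorem correctly and noting that the elliptic symbol with $|\alpha|=1$ is the classical case treated in Theorem \ref{elli}.

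\begin{proof}
For $\alpha \in \mathbb{T}$ and $z = re^{i\theta}$ with $0 \le r < 1$ we have, as computed above,
\[
\widetilde{C_{\phi}}(re^{i\theta}) = \frac{(1-r^2)^2}{(1-r^2\alpha)^2},
\]
which is independent of $\theta$. If $\alpha = 1$ then $\widetilde{C_{\phi}} \equiv 1$, so $\text{Ber}(C_\phi) = \{1\}$ is convex. If $\alpha = -1$ then
\[
\widetilde{C_{\phi}}(re^{i\theta}) = \frac{(1-r^2)^2}{(1+r^2)^2},
\]
and as $r$ ranges over $[0,1)$ this traces the interval $(0,1]$, which is convex. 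Conversely, suppose $\text{Ber}(C_\phi)$ is convex. Since $\mathbb{T} \subset \overline{\mathbb{D}}$, Theorem \ref{elliptic} applies and gives $-1 \le \alpha \le 1$; in particular $\alpha$ is real. Combined with $|\alpha| = 1$, this forces $\alpha = 1$ or $\alpha = -1$.
\end{proof}
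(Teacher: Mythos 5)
Your proof is correct and follows exactly the route the paper intends: the corollary is stated as an immediate consequence of Theorem \ref{elliptic}, obtained by intersecting the condition $-1\leq\alpha\leq 1$ with $\alpha\in\mathbb{T}$, and your direct verification of the cases $\alpha=\pm 1$ matches the computations in that theorem's proof. No gaps.
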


Consider the automorphism of the unit disc $\phi(z) = \frac{az+b}{\overline{b}z+\bar{a}}$ where $a,b\in \mathbb{C}$ and $|a|^2 - |b|^2=1$. For $a,b \in \mathbb{C}$ and $z \in \mathbb{D}$, consider the composition operator $C_{\phi}$ acting on $A^2(\mathbb{D})$. We have
$$\widetilde{C_{\phi}}(z)=\frac{(1 - |z|^2)^2}{(1 - \overline{z}\phi(z))^2}.$$

The Berezin range of these operators is not always convex, as we will see in the example below. Here we try to find the values for which $\text{Ber}(C_\phi)$ is convex. 

\begin{figure}[h]
	\caption{$\text{Ber}(C_\phi)$ on $H^2$ for $\phi(z) = \frac{az+b}{\overline{b}z+\bar{a}}$ with $a=i$ and $b=0$ (left, apparently convex) and $\phi(z) = \frac{az+b}{\overline{b}z+\bar{a}}$ with $a=e^{\frac{i\pi}{12}}$ and $b=0$(right, apparently not convex).}
	\includegraphics[scale=2]{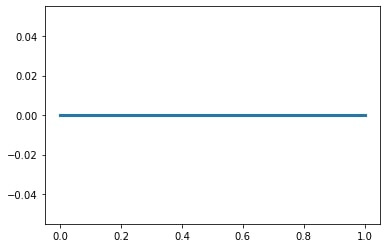}
	\includegraphics[scale=2]{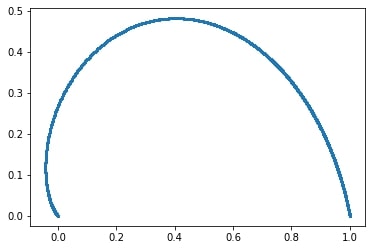}
\end{figure}
 The following remark is a consequence of Theorem \ref{elliptic}.
  \begin{remark}
  	For $b=0$, $\text{Ber}(C_\phi)$ is convex if and only if $a=1,-1,i,-i$.
  \end{remark}

\subsection{Blaschke Factor}
Consider the automorphism of the unit disc known as the Blaschke factor
$$\phi_\alpha(z) = \frac{z-\alpha}{1-\overline{\alpha}z}
$$
where $\alpha\in \mathbb{D}$	  and the composition operator $C_{\phi_\alpha}$ acting on $A^2(\mathbb{D})$. We have
  \begin{center}
  	\begin{equation*}
  		\begin{split}
  			\widetilde{C}_{\phi_\alpha}(z) &=\langle C_{\phi}\hat{k}_{z},\hat{k}_{z}\rangle\\
  			&=(1-|z|^2)^2\langle C_{\phi_\alpha}{k_{z}},{k_{z}}\rangle\\
  			&=(1-|z|^2)^2k_{z}(\phi_\alpha(z))\\
  			&=\frac{(1 - |z|^2)^2}{(1 - \overline{z}\phi_\alpha(z))^2}.
  	\end{split}
  	\end{equation*}
  \end{center}
  The Berezin range of these operators is not always convex, as we will see in the example below. Here we try to find the values of $\alpha$ for which $\text{Ber}(C_\phi)$ is convex.
  \begin{figure}[h]
  	\caption{$\text{Ber}(C_{\phi_\alpha})$ on $L_a^2$ for $\alpha=0.5$.}
  	\includegraphics[scale=3]{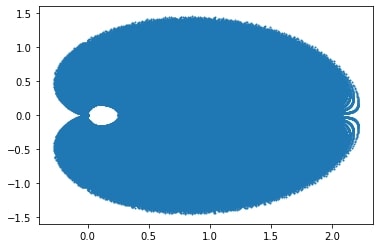}
  \end{figure}

\begin{lemma}\label{real}
	On $A^2(\mathbb{D})$, the real and imaginary parts of $\widetilde{C}_{\phi_\alpha}$ are given by 
	\begin{equation*}
	\begin{aligned}
	Re\{\widetilde{C}_{\phi_\alpha}(z)\} &= k_{\alpha,z}^2\left[(1-|z|^2)(1-Re\{\bar{\alpha}z\})+2(Im\{\bar{\alpha}z\})^2\right]^2\\ 
	&\qquad -k_{\alpha,z}^2 \left[Im\{\bar{\alpha}z\}(1+|z|^2 - 2Re\{\bar{\alpha}z\})\right]^2
	\end{aligned}
	\end{equation*}
	
and

\begin{equation*}
	\begin{aligned}
	Im\{\widetilde{C}_{\phi_\alpha}(z)\} &= 2k_{\alpha,z}^2 \left[(1-|z|^2)(1-Re\{\bar{\alpha}z\})+2(Im\{\bar{\alpha}z\})^2\right]\\ 
	& \qquad\times\left[Im\{\bar{\alpha}z\}(1+|z|^2 - 2Re\{\bar{\alpha}z\})\right]
	\end{aligned}
	\end{equation*}

where
$$k_{\alpha,z} = \frac{(1-|z|^2)}{(1-|z|^2 + 2iIm\{\alpha\bar{z}\})}.$$
\end{lemma}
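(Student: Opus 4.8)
The plan is to compute $\widetilde{C}_{\phi_\alpha}(z)$ directly from the formula
$\widetilde{C}_{\phi_\alpha}(z) = \dfrac{(1-|z|^2)^2}{(1-\bar z\phi_\alpha(z))^2}$
and then separate real and imaginary parts by rationalizing. First I would substitute $\phi_\alpha(z) = \dfrac{z-\alpha}{1-\bar\alpha z}$ into the denominator, giving
$1 - \bar z\phi_\alpha(z) = \dfrac{1-\bar\alpha z - \bar z(z-\alpha)}{1-\bar\alpha z} = \dfrac{1-|z|^2 - \bar\alpha z + \alpha\bar z}{1-\bar\alpha z} = \dfrac{(1-|z|^2) + 2i\,\mathrm{Im}\{\alpha\bar z\}}{1-\bar\alpha z}.$
Hence
$\widetilde{C}_{\phi_\alpha}(z) = \dfrac{(1-|z|^2)^2(1-\bar\alpha z)^2}{\big((1-|z|^2) + 2i\,\mathrm{Im}\{\alpha\bar z\}\big)^2} = k_{\alpha,z}^2\,(1-\bar\alpha z)^2,$
which already isolates the factor $k_{\alpha,z}$ as defined in the statement (note $\mathrm{Im}\{\bar\alpha z\} = -\mathrm{Im}\{\alpha\bar z\}$, so one must be careful with which sign convention makes the two bracketed real expressions come out as written).

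Next I would expand $(1-\bar\alpha z)^2 = 1 - 2\bar\alpha z + \bar\alpha^2 z^2$ and, more usefully, write $1-\bar\alpha z = (1 - \mathrm{Re}\{\bar\alpha z\}) - i\,\mathrm{Im}\{\bar\alpha z\}$ so that
$(1-\bar\alpha z)^2 = \big(1-\mathrm{Re}\{\bar\alpha z\}\big)^2 - \big(\mathrm{Im}\{\bar\alpha z\}\big)^2 - 2i\big(1-\mathrm{Re}\{\bar\alpha z\}\big)\mathrm{Im}\{\bar\alpha z\}.$
The remaining task is purely algebraic bookkeeping: I claim that
$(1-|z|^2)\big(1-\mathrm{Re}\{\bar\alpha z\}\big) + 2\big(\mathrm{Im}\{\bar\alpha z\}\big)^2$ and $\mathrm{Im}\{\bar\alpha z\}\big(1+|z|^2-2\mathrm{Re}\{\bar\alpha z\}\big)$
are, up to the common factor, the real and imaginary parts of $k_{\alpha,z}^{-1}\cdot k_{\alpha,z}\cdot(1-\bar\alpha z)^2$ after one clears the complex denominator of $k_{\alpha,z}$. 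Concretely, I would multiply numerator and denominator of $k_{\alpha,z}$ by the conjugate $(1-|z|^2) - 2i\,\mathrm{Im}\{\alpha\bar z\}$, so that
$k_{\alpha,z} = \dfrac{(1-|z|^2)\big[(1-|z|^2) - 2i\,\mathrm{Im}\{\alpha\bar z\}\big]}{(1-|z|^2)^2 + 4(\mathrm{Im}\{\alpha\bar z\})^2},$
then square it, multiply by the expanded $(1-\bar\alpha z)^2$, and collect terms. Matching the resulting real and imaginary parts against the two displayed bracket expressions (squared, and with the product structure for the imaginary part) verifies the lemma; the factor $k_{\alpha,z}^2$ that survives in front is exactly $(1-|z|^2)^2\big/\big((1-|z|^2)^2 + 4(\mathrm{Im}\{\alpha\bar z\})^2\big)$, which is consistent with $|k_{\alpha,z}|^2$ times a harmless real square.

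The main obstacle is not conceptual but combinatorial: keeping the sign conventions straight between $\mathrm{Im}\{\alpha\bar z\}$ and $\mathrm{Im}\{\bar\alpha z\}$ and between $\mathrm{Re}\{\bar\alpha z\}$ and $|z|^2$, and correctly identifying that after rationalization the expression for $\widetilde{C}_{\phi_\alpha}(z)$ factors as $(P + iQ)^2$ for the two \emph{real} quantities $P = k_{\alpha,z}\big[(1-|z|^2)(1-\mathrm{Re}\{\bar\alpha z\})+2(\mathrm{Im}\{\bar\alpha z\})^2\big]$ and $Q = k_{\alpha,z}\big[\mathrm{Im}\{\bar\alpha z\}(1+|z|^2-2\mathrm{Re}\{\bar\alpha z\})\big]$ — after which $\mathrm{Re} = P^2 - Q^2$ and $\mathrm{Im} = 2PQ$ come for free from $(P+iQ)^2 = (P^2-Q^2) + 2iPQ$, which is precisely the shape of the claimed formulas. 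So the cleanest write-up is: (i) reduce to $\widetilde{C}_{\phi_\alpha}(z) = k_{\alpha,z}^2(1-\bar\alpha z)^2$; (ii) show $k_{\alpha,z}(1-\bar\alpha z) = P + iQ$ by a short computation with the conjugated denominator; (iii) square and read off real and imaginary parts. I expect step (ii) — verifying that the imaginary part of $k_{\alpha,z}(1-\bar\alpha z)$ collapses to the stated single product $\mathrm{Im}\{\bar\alpha z\}(1+|z|^2-2\mathrm{Re}\{\bar\alpha z\})$ after the cross terms cancel — to be the one place where care is genuinely required.
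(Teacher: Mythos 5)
Your outline is essentially the paper's own proof: substitute $\phi_\alpha$ into the denominator to get $D:=1-|z|^2+2i\,\mathrm{Im}\{\alpha\bar z\}$, multiply through by $\overline{D}$, expand $(1-\bar\alpha z)\overline{D}$ into real and imaginary parts
\[
P_0=(1-|z|^2)\bigl(1-\mathrm{Re}\{\bar\alpha z\}\bigr)+2\bigl(\mathrm{Im}\{\bar\alpha z\}\bigr)^2,\qquad
Q_0=\mathrm{Im}\{\bar\alpha z\}\bigl(1+|z|^2-2\,\mathrm{Re}\{\bar\alpha z\}\bigr),
\]
and read off $\mathrm{Re}=P^2-Q^2$, $\mathrm{Im}=2PQ$ from $(P+iQ)^2$; the cross-term cancellation you flag in your step (ii) does go through exactly as you predict. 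The one place your bookkeeping is wrong is the prefactor: conjugation gives $\widetilde{C}_{\phi_\alpha}(z)=\frac{(1-|z|^2)^2}{|D|^4}(P_0+iQ_0)^2$, so the real constant multiplying the squared brackets is $(1-|z|^2)^2\big/\bigl((1-|z|^2)^2+4(\mathrm{Im}\{\alpha\bar z\})^2\bigr)^2$, not $(1-|z|^2)^2\big/\bigl((1-|z|^2)^2+4(\mathrm{Im}\{\alpha\bar z\})^2\bigr)$ as you assert. Equivalently, for $P=k_{\alpha,z}P_0$ and $Q=k_{\alpha,z}Q_0$ to be real and for the displayed formulas to hold, $k_{\alpha,z}$ must be read as the real number $(1-|z|^2)\big/\bigl|1-|z|^2+2i\,\mathrm{Im}\{\alpha\bar z\}\bigr|^2$; the complex denominator printed in the lemma is a typo in the paper, and it is what forced you into the hedge that the factor is ``consistent with $|k_{\alpha,z}|^2$ times a harmless real square.'' (A sanity check: for $\alpha$ and $z=r$ real the formulas must return $(1-\alpha r)^2$, which happens only with the modulus-squared denominator.) With $k_{\alpha,z}$ corrected in this way, your three-step plan is a complete and correct proof by the same route as the paper's.
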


\begin{proof}
We will compute $\widetilde{C}_{\phi_\alpha}$:
\begin{center}
\begin{equation*}
  		\begin{split}
  			\widetilde{C}_{\phi_\alpha}(z) &=\left[\frac{1-|z|^2}{1-\overline{z}\phi_\alpha(z)}\right]^2\\
  			&=\left[\frac{(1-|z|^2)(1-\overline{\alpha}z)}{1-\overline{\alpha}z-\overline{z}(z-\alpha)}\right]^2\\
  			&=\left[\frac{(1-|z|^2)(1-\overline{\alpha}z)}{1-|z|^2+2iIm\{\alpha\overline{z}\}}\right]^2.
    	\end{split}
\end{equation*}
\end{center}
 Multiplying the complex conjugate in the denominator, we get
 \begin{center}
\begin{equation*}
  		\begin{split}
  			\widetilde{C}_{\phi_\alpha}(z) &=\left[k_{\alpha,z}(1-\overline{\alpha}z)(1-|z|^2-2iIm\{\alpha\overline{z}\})\right]^2\\
  			&=\left[k_{\alpha,z}(1-|z|^2+2iIm\{\overline{\alpha}z\}-\overline{\alpha}z(1-|z|^2)+2iIm\{\alpha\overline{z}\}\overline{\alpha}z)\right]^2\\
  			&=k_{\alpha,z}^2[1-|z|^2+2iIm\{\overline{\alpha}z\}-(Re\{\overline{\alpha}z\}+iIm\{\overline{\alpha}z\})(1-|z|^2)\\
  			&\qquad -2iIm\{\overline{\alpha}z\}(Re\{\overline{\alpha}z\}+iIm\{\overline{\alpha}z\})]^2\\
  		   &=k_{\alpha,z}^2[1-|z|^2+2iIm\{\overline{\alpha}z\}- (1-|z|^2)Re\{\overline{\alpha}z\} - i(1-|z|^2)Im\{\overline{\alpha}z\} \\
  			& \qquad -2iIm\{\overline{\alpha}z\}Re\{\overline{\alpha}z\} +2(Im\{\overline{\alpha}z\})^2]^2\\
  			&=k_{\alpha,z}^2[1-|z|^2 - (1-|z|^2)Re\{\overline{\alpha}z\} +2(Im\{\overline{\alpha}z\})^2)\\
  			&\qquad +i(2Im\{\overline{\alpha}z\} - (1-|z|^2)Im\{\overline{\alpha}z\} - 2Im\{\overline{\alpha}z\}Re\{\overline{\alpha}z\}]^2\\
  			&=k_{\alpha,z}^2[(1-|z|^2)(1-Re\{\overline{\alpha}z\}) + 2(Im\{\overline{\alpha}z\})^2\\
  			&\qquad + iIm\{\overline{\alpha}z\}(1+|z|^2-2Re\{\overline{\alpha}z\})]^2
    	\end{split}
\end{equation*}
\end{center}
Squaring and combining the real and imaginary parts, we get
\begin{center}
  	\begin{equation*}
  		\begin{split}
  			\widetilde{C}_{\phi_\alpha}(z) &=k_{\alpha,z}^2\left[(1-|z|^2)(1-Re\{\bar{\alpha}z\})+2(Im\{\bar{\alpha}z\})^2\right]^2 \\
  			&\qquad -k_{\alpha,z}^2 \left[Im\{\bar{\alpha}z\}(1+|z|^2 - 2Re\{\bar{\alpha}z\})\right]^2 \\
  			&\quad+ik_{\alpha,z}^2 \left[(1-|z|^2)(1-Re\{\bar{\alpha}z\})+2(Im\{\bar{\alpha}z\})^2\right]\\
  			&\qquad\times\left[Im\{\bar{\alpha}z\}(1+|z|^2 - 2Re\{\bar{\alpha}z\})\right].
    	\end{split}
  	\end{equation*}
  \end{center}
Hence the proof.
\end{proof}

\begin{proposition}\label{conj}
The Berezin range of $C_{\phi_\alpha}$ on $A^2(\mathbb{D})$ is closed under complex conjugation and therefore symmetric about the real axis. 
\end{proposition}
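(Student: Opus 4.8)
The plan is to exhibit an explicit modulus-preserving involution $\sigma$ of $\mathbb{D}$ for which $\widetilde{C}_{\phi_\alpha}\circ\sigma=\overline{\widetilde{C}_{\phi_\alpha}}$. Once this is available, for any $w=\widetilde{C}_{\phi_\alpha}(z)\in\mathrm{Ber}(C_{\phi_\alpha})$ its conjugate $\bar w=\widetilde{C}_{\phi_\alpha}(\sigma(z))$ again lies in $\mathrm{Ber}(C_{\phi_\alpha})$, so the set is closed under conjugation, i.e.\ symmetric about the real axis. If $\alpha=0$ then $\phi_0=\mathrm{id}$, $C_{\phi_0}=I$, and $\mathrm{Ber}(C_{\phi_0})=\{1\}$, so there is nothing to prove; hence I assume $\alpha\neq0$ from now on.

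First I would choose $\sigma(z):=\dfrac{\alpha}{\bar\alpha}\,\bar z=e^{2i\arg\alpha}\,\bar z$, the reflection of $\mathbb{D}$ across the diameter through $0$ and $\alpha$. Then $|\sigma(z)|=|z|$, so $\sigma$ maps $\mathbb{D}$ into itself, and a one-line computation gives $\sigma(\sigma(z))=z$. Next I would verify the intertwining relation starting from the simplified formula obtained in the proof of Lemma~\ref{real},
\[
\widetilde{C}_{\phi_\alpha}(z)=\left[\frac{(1-|z|^2)(1-\bar\alpha z)}{1-|z|^2+2i\,\mathrm{Im}\{\alpha\bar z\}}\right]^2 .
\]
The two identities that drive the argument are $\bar\alpha\,\sigma(z)=\bar\alpha\cdot\frac{\alpha}{\bar\alpha}\bar z=\alpha\bar z=\overline{\bar\alpha z}$ and $\mathrm{Im}\{\alpha\,\overline{\sigma(z)}\}=\mathrm{Im}\{\bar\alpha z\}=-\mathrm{Im}\{\alpha\bar z\}$, together with $1-|\sigma(z)|^2=1-|z|^2\in\mathbb{R}$. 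Substituting $\sigma(z)$ into the display, the numerator becomes $(1-|z|^2)\overline{(1-\bar\alpha z)}$ and the denominator becomes $\overline{1-|z|^2+2i\,\mathrm{Im}\{\alpha\bar z\}}$, so the bracketed quotient is replaced by its complex conjugate; since squaring commutes with conjugation, $\widetilde{C}_{\phi_\alpha}(\sigma(z))=\overline{\widetilde{C}_{\phi_\alpha}(z)}$. Equivalently, one can read this straight off Lemma~\ref{real}: the real part there depends on $\mathrm{Im}\{\bar\alpha z\}$ only through its square while the imaginary part is odd in $\mathrm{Im}\{\bar\alpha z\}$, and $\sigma$ fixes $|z|$ and $\mathrm{Re}\{\bar\alpha z\}$ while negating $\mathrm{Im}\{\bar\alpha z\}$ (one checks $|k_{\alpha,\sigma(z)}|=|k_{\alpha,z}|$ as well).

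Finally I would conclude: given $w\in\mathrm{Ber}(C_{\phi_\alpha})$, write $w=\widetilde{C}_{\phi_\alpha}(z)$; then $\bar w=\widetilde{C}_{\phi_\alpha}(\sigma(z))\in\mathrm{Ber}(C_{\phi_\alpha})$, so $\overline{\mathrm{Ber}(C_{\phi_\alpha})}\subseteq\mathrm{Ber}(C_{\phi_\alpha})$, and applying this once more (or using that $\sigma$ is an involution) yields equality. I do not expect a genuine obstacle here; the only real choice is spotting the reflection $\sigma$, and the one point needing care is that the substituted argument stays in $\mathbb{D}$, which holds precisely because $\sigma$ preserves moduli. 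A second, essentially equivalent route would avoid naming $\sigma$: use the rotational identity $\widetilde{C}_{\phi_\alpha}\!\big(\tfrac{\alpha}{|\alpha|}z\big)=\widetilde{C}_{\phi_{|\alpha|}}(z)$ (a direct computation, using $\tfrac{\overline{\alpha}}{|\alpha|}\phi_\alpha(\tfrac{\alpha}{|\alpha|}z)=\phi_{|\alpha|}(z)$) to get $\mathrm{Ber}(C_{\phi_\alpha})=\mathrm{Ber}(C_{\phi_{|\alpha|}})$, and then note that $\widetilde{C}_{\phi_{|\alpha|}}$ has real coefficients, so $\widetilde{C}_{\phi_{|\alpha|}}(\bar z)=\overline{\widetilde{C}_{\phi_{|\alpha|}}(z)}$; combining the two gives the symmetry.
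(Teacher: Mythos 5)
Your proof is correct and follows essentially the same route as the paper: your reflection $\sigma(z)=e^{2i\arg\alpha}\bar z$ is exactly the paper's substitution $re^{i\theta}\mapsto re^{i(2\psi-\theta)}$ (with $\alpha=\rho e^{i\psi}$), and both arguments verify that this reflection conjugates $\widetilde{C}_{\phi_\alpha}$. Your verification via the simplified formula from Lemma~\ref{real} is in fact a bit cleaner than the paper's, which reduces to an identity for $\phi_\alpha$ after an imprecise square-root step.
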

\begin{proof}
	Put $z=re^{i\theta}$ and $\alpha = \rho e^{i\psi}$. We claim that $\widetilde{C_{\phi}}(re^{i\theta})=\overline{\widetilde{C_{\phi}}(re^{i(2\psi-\theta)})}$. This is the case if and only if
$$\left[\frac{1-r^2}{1-re^{-i\theta}\phi_\alpha(re^{i\theta})}\right]^2 = \overline{\left[\frac{1-r^2}{1-re^{-i(2\psi-\theta)}\phi_\alpha(re^{i(2\psi-\theta)})}\right]^2}.$$
Since both the terms a positive, taking the square root, we get
 $$\frac{1-r^2}{1-re^{-i\theta}\phi_\alpha(re^{i\theta})} = \overline{\frac{1-r^2}{1-re^{-i(2\psi-\theta)}\phi_\alpha(re^{i(2\psi-\theta)})}},$$
 if and only if $re^{-i\theta}\phi_\alpha(re^{i\theta}) = re^{i(2\psi-\theta)} \overline{\phi_\alpha(re^{i(2\psi-\theta)})}$ or, equivalently, if and only if $\phi_\alpha(re^{i\theta}) = e^{i(2\psi)} \overline{\phi_\alpha(re^{i(2\psi-\theta)})}$ So let us compute:
	\begin{center}
		\begin{equation*}
			\begin{split}
				e^{i(2\psi)}\overline{\phi_\alpha(re^{i(2\psi-\theta)})} &= e^{i(2\psi)}\frac{re^{i(\theta-2\psi)}-\rho e^{-i\psi}}{1-\rho e^{i\psi}re^{i(\theta-2\psi)}}\\
				&=\frac{re^{i\theta}-\rho e^{i\psi}}{1-\rho e^{-i\psi}re^{i\theta}}\\
				&=\phi_\alpha(re^{i\theta}).
			\end{split}
		\end{equation*}
	\end{center}
\end{proof}

\begin{corollary}
	If the Berezin range of $C_{\phi_\alpha}$ on $A^2(\mathbb{D})$ is convex, then $Re\{\widetilde{C}_{\phi_\alpha}(z)\} \in \text{Ber}(C_\phi)$ for each $z\in \mathbb{D}$.
\end{corollary}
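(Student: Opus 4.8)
The plan is to combine the symmetry statement of Proposition \ref{conj} with the definition of convexity; no fresh computation is needed. Fix $z\in\mathbb{D}$ and write $w:=\widetilde{C}_{\phi_\alpha}(z)$. By the very definition of the Berezin range, $w\in\text{Ber}(C_{\phi_\alpha})$.

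Next I would invoke Proposition \ref{conj}: since $\text{Ber}(C_{\phi_\alpha})$ is closed under complex conjugation, $\overline{w}\in\text{Ber}(C_{\phi_\alpha})$ as well. It is worth noting that this is membership in $\text{Ber}(C_{\phi_\alpha})$ itself, not merely in its closure: writing $z=re^{i\theta}$ and $\alpha=\rho e^{i\psi}$, the proof of Proposition \ref{conj} exhibits the explicit point $z'=re^{i(2\psi-\theta)}\in\mathbb{D}$ with $\widetilde{C}_{\phi_\alpha}(z')=\overline{\widetilde{C}_{\phi_\alpha}(z)}$, so $\overline{w}$ is genuinely attained.

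Finally, assume $\text{Ber}(C_{\phi_\alpha})$ is convex. Then the segment joining $w$ and $\overline{w}$ lies entirely in $\text{Ber}(C_{\phi_\alpha})$; in particular its midpoint
$$\tfrac{1}{2}\bigl(w+\overline{w}\bigr)=\mathrm{Re}\{w\}=\mathrm{Re}\{\widetilde{C}_{\phi_\alpha}(z)\}$$
belongs to $\text{Ber}(C_{\phi_\alpha})$. Since $z\in\mathbb{D}$ was arbitrary, the corollary follows.

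There is no real obstacle here; the only point requiring a moment's care is that deducing membership of the midpoint from convexity requires \emph{both} endpoints to lie in the set, and it is exactly the form of Proposition \ref{conj}—symmetry of $\text{Ber}(C_{\phi_\alpha})$ on the nose, rather than of its closure—that delivers $\overline{w}\in\text{Ber}(C_{\phi_\alpha})$. With only a closure-level symmetry one would instead conclude $\mathrm{Re}\{\widetilde{C}_{\phi_\alpha}(z)\}\in\overline{\text{Ber}(C_{\phi_\alpha})}$.
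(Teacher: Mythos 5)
Your argument is correct and is essentially the paper's own proof: both take $w=\widetilde{C}_{\phi_\alpha}(z)$ and $\overline{w}$, which lies in $\text{Ber}(C_{\phi_\alpha})$ by Proposition \ref{conj}, and use convexity to place their midpoint $\mathrm{Re}\{\widetilde{C}_{\phi_\alpha}(z)\}$ in the Berezin range. Your added remark that the symmetry is exact (witnessed by $z'=re^{i(2\psi-\theta)}$) rather than merely closure-level is a worthwhile point of care, but it does not change the route.
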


\begin{proof}
	Suppose $\text{Ber}(C_\phi)$ is convex. Then from Proposition \ref{conj} $\text{Ber}(C_\phi)$ is closed under complex conjugation. Therefore we have
	\begin{center}
		$\frac{1}{2}\widetilde{C}_{\phi_\alpha}(z) + \frac{1}{2}\overline{\widetilde{C}_{\phi_\alpha}(z)} = Re\{\widetilde{C}_{\phi_\alpha}(z)\} \in \text{Ber}(C_{\phi_\alpha})$.
	\end{center}
\end{proof}

\begin{theorem}
The Berezin range of $C_{\phi_\alpha}$ on $A^2(\mathbb{D})$ is convex if and only if $\alpha=0.$
\end{theorem}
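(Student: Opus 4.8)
The plan is to deal with the trivial direction at once and then, for $\alpha\neq 0$, to determine $\text{Ber}(C_{\phi_\alpha})\cap\mathbb{R}$ exactly and contradict convexity by sending $z$ to the boundary. If $\alpha=0$ then $\phi_0(z)=z$, so $C_{\phi_0}$ is the identity, $\widetilde{C}_{\phi_0}\equiv 1$, and $\text{Ber}(C_{\phi_0})=\{1\}$, which is convex. Assume henceforth that $\alpha\neq 0$ and, toward a contradiction, that $\text{Ber}(C_{\phi_\alpha})$ is convex.

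The first step is to locate the $z$ for which $\widetilde{C}_{\phi_\alpha}(z)$ is real. Put $A(z)=(1-|z|^2)(1-\mathrm{Re}\{\bar\alpha z\})+2(\mathrm{Im}\{\bar\alpha z\})^2$ and $B(z)=\mathrm{Im}\{\bar\alpha z\}(1+|z|^2-2\mathrm{Re}\{\bar\alpha z\})$, so that by Lemma~\ref{real} $\mathrm{Im}\{\widetilde{C}_{\phi_\alpha}(z)\}=0$ precisely when $A(z)B(z)=0$. Since $\mathrm{Re}\{\bar\alpha z\}\le|\alpha||z|<1$, we have $A(z)>0$ for all $z\in\mathbb{D}$; and since $2\mathrm{Re}\{\bar\alpha z\}\le 2|z|<1+|z|^2$ for $|z|<1$, we have $1+|z|^2-2\mathrm{Re}\{\bar\alpha z\}>0$, so $B(z)=0$ exactly when $\mathrm{Im}\{\bar\alpha z\}=0$. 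Hence $\widetilde{C}_{\phi_\alpha}(z)\in\mathbb{R}$ if and only if $z$ lies on the diameter $L_\alpha=\{s\alpha/|\alpha|:-1<s<1\}$.

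The second step is to evaluate $\widetilde{C}_{\phi_\alpha}$ on $L_\alpha$: taking $z=s\alpha/|\alpha|$, one computes $1-\bar z\phi_\alpha(z)=(1-s^2)/(1-s|\alpha|)$ and hence $\widetilde{C}_{\phi_\alpha}(z)=(1-s|\alpha|)^2$; as $s$ runs over $(-1,1)$ this is strictly decreasing and covers exactly $\big((1-|\alpha|)^2,(1+|\alpha|)^2\big)$. Therefore $\text{Ber}(C_{\phi_\alpha})\cap\mathbb{R}=\big((1-|\alpha|)^2,(1+|\alpha|)^2\big)$, an interval whose left endpoint is strictly positive since $|\alpha|<1$. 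Now, by the Corollary to Proposition~\ref{conj}, convexity of $\text{Ber}(C_{\phi_\alpha})$ would force $\mathrm{Re}\{\widetilde{C}_{\phi_\alpha}(z)\}\in\text{Ber}(C_{\phi_\alpha})$, and hence $\mathrm{Re}\{\widetilde{C}_{\phi_\alpha}(z)\}\in\big((1-|\alpha|)^2,(1+|\alpha|)^2\big)$, for every $z\in\mathbb{D}$; in particular $\mathrm{Re}\{\widetilde{C}_{\phi_\alpha}(z)\}>(1-|\alpha|)^2>0$ for all $z$. But since $\phi_\alpha$ is not the identity and extends continuously to $\overline{\mathbb{D}}$, there is $\zeta\in\mathbb{T}$ with $\phi_\alpha(\zeta)\neq\zeta$ (equivalently $\bar\zeta\phi_\alpha(\zeta)\neq 1$), and then $\widetilde{C}_{\phi_\alpha}(r\zeta)=(1-r^2)^2(1-r\bar\zeta\phi_\alpha(r\zeta))^{-2}\to 0$ as $r\to 1^-$, so $\mathrm{Re}\{\widetilde{C}_{\phi_\alpha}(r\zeta)\}\to 0$ — contradicting the bound just derived. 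Hence $\text{Ber}(C_{\phi_\alpha})$ is not convex when $\alpha\neq 0$.

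I expect the main work to be the positivity bookkeeping in the first step (the inequalities $A(z)>0$ and $1+|z|^2-2\mathrm{Re}\{\bar\alpha z\}>0$, so that the real locus of $\widetilde{C}_{\phi_\alpha}$ is precisely the diameter $L_\alpha$) and, in the last step, choosing $\zeta$ away from the (at most two) boundary fixed points $\pm\alpha/|\alpha|$ of $\phi_\alpha$ so that the limit is genuinely $0$ rather than $(1\mp|\alpha|)^2$. Everything else is routine algebra, and the convexity input is entirely supplied by the Corollary to Proposition~\ref{conj}.
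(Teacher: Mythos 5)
Your proof is correct and follows essentially the same route as the paper: the trivial $\alpha=0$ direction, then using the conjugation-symmetry corollary to force $\mathrm{Re}\{\widetilde{C}_{\phi_\alpha}(z)\}$ into $\mathrm{Ber}(C_{\phi_\alpha})$, identifying the real locus of $\widetilde{C}_{\phi_\alpha}$ as the diameter through $\alpha$, computing the real values there as an interval bounded away from $0$, and contradicting this with the boundary limit. Your version is in fact more careful than the paper's on two points: the interval is indeed $\bigl((1-|\alpha|)^2,(1+|\alpha|)^2\bigr)$ as you compute (the paper's fourth power is a slip in the same calculation), and you rightly restrict to $\zeta\in\mathbb{T}$ away from the two boundary fixed points $\pm\alpha/|\alpha|$ so that the radial limit is genuinely $0$, whereas the paper asserts this limit for every direction.
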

\begin{proof}
Suppose that $\alpha=0$. Then we have $\phi_\alpha(z) = z$ and 
$$\widetilde{C}_{\phi_\alpha}(z) = \frac{(1 - |z|^2)^2}{(1 - \overline{z}z)^2} = 1.$$
So $\text{Ber}(C_{\phi_\alpha})= \{1\}$, which is convex.
Conversely, assume that $\text{Ber}(C_{\phi_\alpha})$ is convex. From Proposition \ref{conj} we have $Re\{\widetilde{C}_{\phi_\alpha}(z)\} \in \text{Ber}(C_{\phi_\alpha})$. Therefore, for each $z\in \mathbb{D}$, we can find $w\in \mathbb{D}$ such that
$$\widetilde{C}_{\phi_\alpha}(w) = Re\{\widetilde{C}_{\phi_\alpha}(z)\}.$$
From this, we get

\begin{center}
\begin{equation*}
\begin{split}
Im\{\widetilde{C}_{\phi_\alpha}\}(w) &= 2k_{\alpha,w}^2 \left[(1-|w|^2)(1-Re\{\bar{\alpha}w\})+2(Im\{\bar{\alpha}w\})^2\right]\\
          &\qquad\times \left[Im\{\bar{\alpha}w\}(1+|w|^2 - 2Re\{\bar{\alpha}w\})\right]\\
          &=0
\end{split}
\end{equation*}
\end{center} 
where $k_{\alpha,w}$ is defined as in Lemma \ref{real}. Since $k_{\alpha,w}^2, (1+|w|^2 - 2Re\{\bar{\alpha}w\}), (1-|w|^2)$ and $\left[(1-Re\{\bar{\alpha}w\})+2(Im\{\bar{\alpha}w\})^2\right]$ are greater than zero for any $\alpha,w \in \mathbb{D}$, we have $Im\{\widetilde{C}_{\phi_\alpha}\}(w) = 0$ if and only if $Im\{\bar{\alpha}w\} = 0.$ This says that $\alpha$ and $w$ lie on a line passing through the origin. So we put $w=r\alpha$ for some $r \in (-1/|\alpha|,1/|\alpha|).$ Now we have
\begin{center}
		\begin{equation*}
			\begin{split}
				\widetilde{C}_{\phi_\alpha}(w) &= Re\{\widetilde{C}_{\phi_\alpha}(r\alpha)\}\\
				&=k_{\alpha,r\alpha}^2\left[(1-|r\alpha|^2)(1-Re\{\bar{\alpha}r\alpha\})+2(Im\{\bar{\alpha}r\alpha\})^2\right]^2\\
				&\quad - k_{\alpha,r\alpha}^2\left[Im\{\bar{\alpha}r\alpha\}(1+|r\alpha|^2 - 2Re\{\bar{\alpha}r\alpha\})\right]^2\\
				&=\left[\frac{(1-|r\alpha|^2)}{(1-|r\alpha|^2 + 2iIm\{\alpha\bar{r\alpha}\})}\right]^2\left[(1-|r\alpha|^2)(1-Re\{\bar{\alpha}r\alpha\})+2(Im\{\bar{\alpha}r\alpha\})^2\right]^2\\ 
				&\quad - \left[\frac{(1-|r\alpha|^2)}{(1-|r\alpha|^2 + 2iIm\{\alpha\bar{r\alpha}\})}\right]^2\left[Im\{\bar{\alpha}r\alpha\}(1+|r\alpha|^2 - 2Re\{\bar{\alpha}r\alpha\})\right]^2\\
				&=\frac{1}{(1-|r\alpha|^2)^2}\left[(1-|r\alpha|^2)(1-r|\alpha|^2)^2\right]^2\\
				&= (1-r|\alpha|^2)^4.
			\end{split}
		\end{equation*}
	\end{center}
Therefore, we get, $\{\widetilde{C}_{\phi_\alpha}(r\alpha) : r\in(-1/|\alpha|,1/|\alpha|)\} = \left((1-|\alpha|)^4,(1+|\alpha|)^4\right).$
Now put $z=\rho e^{i\theta}$, we can show that
\[
    \lim_{\rho\rightarrow 1^-} \widetilde{C}_{\phi_\alpha}(\rho e^{i\theta})= \begin{cases}
  0  &  \text{if}~~ \alpha\neq 0, \\
  1 & \text{if}~~ \alpha=0.
\end{cases}
  \]
 From this we can see that when $\alpha \neq 0$, given $\epsilon$ with $0<\epsilon< (1 - |\alpha|)^4$, there exist a point $z$ such that $|Re\{\widetilde{C}_{\phi_\alpha}(z)\}|<\epsilon$. But if $\widetilde{C}_{\phi_\alpha}(w) = Re\{\widetilde{C}_{\phi_\alpha}(z)\}$, we get a contradiction since $\widetilde{C}_{\phi_\alpha}(w)\in \left((1-|\alpha|)^4,(1+|\alpha|)^4\right).$ Thus, for $\alpha\neq 0$, $\text{Ber}(C_{\phi_\alpha})$ cannot be convex.
\end{proof}

\section{The Berezin set mapping theorem for some self-adjoint operators}

In this section, we use superquadratic functions to prove the Berezin set
mapping theorem for some selfadjoint operators on the reproducing kernel
Hilbert space.

Let $J\subseteq\mathbb{R=(}-\infty,+\infty\mathbb{)}$ be an interval. Recall
that a function $f:J\rightarrow\mathbb{R}$ is called convex if
\[
f(tx+(1-t)y)\leq tf(x)+(1-t)f(y)
\]
for all points $x,y\in J$ and all $t\in\left[  0,1\right]  .$ If $-f$ is
convex then we say that $f$ is concave. Moreover, if both convex and concave,
then $f$ is said to be affine.

\begin{definition}
\label{D1}\textbf{(}\cite{1}\textbf{) }A function $f:[0,\infty)\rightarrow
\mathbb{R}$ is superquadratic provided that for all $x\geq0$ there exists a
constant $C_{x}\in\mathbb{R}$ such that%
\begin{equation}
f(y)\geq f(x)+C_{x}(y-x)+f(\left\vert y-x\right\vert ) \label{1}%
\end{equation}
for all $y\geq0.$
\end{definition}

As observed in \cite{1}, if $f(x)$ is superquadratic and $a,b\geq0,$ then
$f(x)-(ax+b)$ is also superquadratic. We say that $f$ is subquadratic if $-f$
is superquadratic. Thus, for a superquadratic function we require that $f$ lie
above its tangent line plus a translation of $f$ itself.

Also remark that at first glance, condition (\ref{1}) appears to be stronger
than convexity but if $f$ takes negative values then it may be considerably
weaker. To emphasize just how poorly behaved superquadratic functions can be
we remark that any function $f$ satisfying $-2\leq f(x)\leq-1$ for all $x$ is
superquadratic. Just take $C_{x}=0$ in (\ref{1}).

Non-negative superquadratic functions are much better behaved as we see next
(see \cite{1}).

\begin{lemma}
\label{L1}Let $f$ be a superquadratic function with $C_{x}$ as in Definition
\ref{D1}.

(i) Then $f(0)\leq0.$

(ii) If $f(0)=f^{\prime}(0)=0,$ then $C_{x}=f^{\prime}(x)$ whenever $f$ if
differentiable at $x>0.$

(iii) If $f\geq0,$ then $f$ is convex and $f(0)=f^{\prime}(0)=0.$
\end{lemma}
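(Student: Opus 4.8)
The plan is to derive all three parts from the defining inequality (\ref{1}) by judicious choices of the free parameters $x,y\geq 0$. For part (i) I would set $x=y=0$ in (\ref{1}): then $C_{0}(y-x)=0$ and $|y-x|=0$, so (\ref{1}) collapses to $f(0)\geq f(0)+f(0)$, i.e.\ $f(0)\leq 0$, and nothing further is needed.

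For part (ii), I would fix $x>0$ at which $f$ is differentiable and introduce the auxiliary function $g(y):=f(y)-f(x)-C_{x}(y-x)-f(|y-x|)$ on $[0,\infty)$. By (\ref{1}) we have $g\geq 0$, while $g(x)=-f(0)=0$ since $f(0)=0$ by hypothesis; hence $g$ attains a global minimum at the interior point $x$. The only delicate point is the differentiability of $g$ at $x$: the pieces $f(y)$ and $f(x)+C_{x}(y-x)$ are differentiable there by hypothesis, and for $h(y):=f(|y-x|)$ the right and left difference quotients at $x$ reduce to $\pm\lim_{t\to 0^{+}}f(t)/t=\pm f'(0)=0$, so $h$ is differentiable at $x$ with $h'(x)=0$. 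Then $g'(x)=f'(x)-C_{x}$, and the interior-minimum condition $g'(x)=0$ forces $C_{x}=f'(x)$.

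For part (iii), I would first combine part (i) with $f\geq 0$ to get $f(0)=0$. Since $f\geq 0$, (\ref{1}) then yields $f(y)\geq f(x)+C_{x}(y-x)$ for all $y\geq 0$, so the affine function $\ell_{x}(y)=f(x)+C_{x}(y-x)$ is a global support line of $f$ touching at $x$; evaluating $\ell_{x_{0}}$ at $x_{1}$ and $x_{2}$ and taking the convex combination with $x_{0}=tx_{1}+(1-t)x_{2}$ gives convexity of $f$ on $[0,\infty)$. It then remains to show that $f'(0)$ — which exists as a one-sided derivative by convexity — vanishes. On one hand $f(x)/x\geq 0$, and convexity together with $f(0)=0$ makes $x\mapsto f(x)/x$ non-decreasing on $(0,\infty)$, so $\lim_{x\to 0^{+}}f(x)/x$ exists and is $\geq 0$. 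On the other hand, putting $y=0$ in (\ref{1}) gives $C_{x}x\geq 2f(x)$, and then putting $y=2x$ gives $f(2x)\geq 2f(x)+C_{x}x\geq 4f(x)$; iterating downward gives $f(2^{-n}x_{1})\leq 4^{-n}f(x_{1})$ for each fixed $x_{1}>0$, so
\[
\lim_{x\to 0^{+}}\frac{f(x)}{x}=\lim_{n\to\infty}\frac{f(2^{-n}x_{1})}{2^{-n}x_{1}}\leq\lim_{n\to\infty}\frac{4^{-n}f(x_{1})}{2^{-n}x_{1}}=0,
\]
which gives $f'(0)=0$ and completes (iii).

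I expect the scaling argument at the end of part (iii) to be the main obstacle: convexity by itself does not force the derivative at an endpoint to vanish, so one has to use the self-referential summand $f(|y-x|)$ in (\ref{1}) — via the doubling inequality $f(2x)\geq 4f(x)$ — to control $f$ near $0$. The differentiability bookkeeping in part (ii) is the other place that needs a little care, but it becomes routine once the auxiliary function $g$ is in place.
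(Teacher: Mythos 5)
Your proposal is correct, but note that the paper does not actually prove this lemma: it is quoted verbatim from \cite{1} (Abramovich--Jameson--Sinnamon) with no argument supplied, so there is no in-paper proof to compare against. Your self-contained derivation is sound and follows the standard lines of the source: (i) is immediate from $x=y=0$; in (ii) the auxiliary function $g(y)=f(y)-f(x)-C_x(y-x)-f(|y-x|)$ is nonnegative with $g(x)=-f(0)=0$, and your verification that $y\mapsto f(|y-x|)$ has vanishing derivative at $y=x$ (both one-sided quotients reduce to $\pm f'(0)=0$) is exactly the point that needs care, so the interior-minimum condition legitimately yields $C_x=f'(x)$; in (iii) the support-line argument gives convexity, and you correctly identify that convexity alone cannot force $f'(0)=0$ at the endpoint --- the doubling inequality $f(2x)\geq 2f(x)+C_xx\geq 4f(x)$, obtained from $y=0$ and $y=2x$ in (\ref{1}), is the right mechanism, and combined with the monotonicity of $f(x)/x$ it pins the one-sided derivative at $0$ to zero. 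I see no gaps.
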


The next result (see \cite{1}) gives a sufficient condition when convexity
(concavity) implies super (sub) quadraticity.

\begin{lemma}
\label{L2}If $f^{\prime}$ is convex (concave) and $f(0)=f^{\prime}(0)=0,$ then
it is super (sub) quadratic. The converse is not true.
\end{lemma}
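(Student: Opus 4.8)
The plan is to take, for every $x\ge 0$, the constant $C_x:=f'(x)$ — the choice forced by Lemma~\ref{L1}(ii) — and to verify the defining inequality \eqref{1} directly, after rewriting both sides via the Fundamental Theorem of Calculus in terms of $g:=f'$. Since $f'$ is convex it is continuous on $(0,\infty)$, hence Riemann integrable on compact subintervals of $[0,\infty)$, so $f(b)-f(a)=\int_a^b g(t)\,dt$ for $0\le a\le b$; also $g(0)=f'(0)=0$ and, using $f(0)=0$, $f(u)=\int_0^u g(t)\,dt$ for $u\ge 0$.

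The only property of $g$ the argument needs is \emph{superadditivity on $[0,\infty)$}: a convex $g$ with $g(0)=0$ satisfies $g(a+b)\ge g(a)+g(b)$ for all $a,b\ge 0$. This is immediate, since $a=\tfrac{a}{a+b}(a+b)+\tfrac{b}{a+b}\cdot 0$ gives $g(a)\le \tfrac{a}{a+b}\,g(a+b)$, symmetrically $g(b)\le\tfrac{b}{a+b}\,g(a+b)$, and adding the two proves it (the cases $a=0$ or $b=0$ being trivial).

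Next I would fix $x,y\ge 0$ and put $\Phi(x,y):=f(y)-f(x)-g(x)(y-x)-f(|y-x|)$, aiming to show $\Phi\ge 0$. If $y\ge x$, substituting $t=x+s$ in $\int_x^y\bigl(g(t)-g(x)\bigr)\,dt$ turns this into
\[
\Phi(x,y)=\int_0^{y-x}\bigl(g(x+s)-g(x)-g(s)\bigr)\,ds\;\ge\;0
\]
by superadditivity. If $0\le y<x$, then $|y-x|=x-y$, and substituting $t=x-s$ in $\int_y^x\bigl(g(x)-g(t)\bigr)\,dt$ gives
\[
\Phi(x,y)=\int_0^{x-y}\bigl(g(x)-g(x-s)-g(s)\bigr)\,ds\;\ge\;0,
\]
again by superadditivity, now with $a=x-s\ge y\ge 0$, $b=s$, and $a+b=x$. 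Hence \eqref{1} holds with $C_x=f'(x)$ and $f$ is superquadratic; the subquadratic (concave-derivative) case follows by applying this conclusion to $-f$.

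Finally, for the assertion that the converse fails I would exhibit a superquadratic $f$ with $f(0)=f'(0)=0$ whose derivative is not convex — for example a $C^2$, eventually-quadratic function whose second derivative is not monotone, or one of the examples in \cite{1} — and check by a short computation that \eqref{1} survives while convexity of $f'$ is lost. I expect the only real friction in the forward direction to be the bookkeeping around the term $f(|y-x|)$, i.e.\ organizing the two sign cases so that each collapses onto the single superadditivity inequality; and the only genuinely ad hoc point of the lemma to be producing a clean counterexample for the converse.
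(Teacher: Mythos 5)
The paper offers no proof of Lemma~\ref{L2} at all --- it is quoted from \cite{1} --- so there is nothing internal to compare against; judged on its own, your forward argument is correct and is essentially the standard proof from that reference: with $C_x=f'(x)$ and $g=f'$, convexity plus $g(0)=0$ gives superadditivity of $g$ on $[0,\infty)$, and both sign cases of \eqref{1} reduce, after the substitutions you indicate, to integrating the pointwise inequality $g(a+b)\ge g(a)+g(b)$; the concave case follows by negation. Two small points. First, the appeal to the Fundamental Theorem of Calculus needs $f'$ Riemann integrable on $[0,M]$; this does hold (a convex function on a compact interval is bounded and continuous except possibly at the endpoints, and $f'$ is a derivative so it has no jump there), but it is worth saying rather than only invoking continuity on the open interval. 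Second, the claim ``the converse is not true'' is left as a promissory note --- you say you \emph{would} exhibit a counterexample but do not produce one. This is the only actual gap, and it is trivially filled: as the paper itself observes after Definition~\ref{D1}, any function with $-2\le f\le -1$ is superquadratic with $C_x=0$, yet such an $f$ violates $f(0)=0$ (and need not be differentiable), so superquadraticity does not imply the hypotheses of the lemma. With that one sentence added, your proof is complete and, unlike the paper, self-contained.
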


Remark that subquadraticity does always not imply concavity, i.e., there
exists a subquadratic function which is convex. For example, $f(x)=x^{p},$
$x\geq0$ and $1\leq p\leq2$ is subquadratic and convex.

In 1906, Jensen in \cite{9} proved his famous characterization of convex
functions. Namely, for a continuous functions $f$ defined on a real interval
$J,$ $f$ is convex if and only if%
\[
f\left(\frac{x+y}{2}\right)\leq\frac{f(x)+f(y)}{2}%
\]
for all $x,y\in J.$

In 1965, a parallel characterization of Jensen convexity was presented by
Popoviciu \cite{23}, where he proved his celebrated inequality (named now
Popoviciu inequality in the literature), as follows:

\begin{theorem}
\label{T1}Let $f:J\rightarrow\mathbb{R}$ be a continuous function. Then $f$ is
convex if and only if%
\begin{equation}
\frac{2}{3}\left[  f\left(\frac{x+z}{2}\right)+f\left(\frac{y+z}{2}\right)+f\left(\frac{x+y}{2}\right)\right]
\leq f\left(\frac{x+y+z}{3}\right)+\frac{f(x)+f(y)+f(z)}{3} \label{2}%
\end{equation}
for all $x,y,z\in J,$ and the equality occured by $f(x)=x,$ $x\in J.$
\end{theorem}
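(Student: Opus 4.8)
The plan is to prove both implications directly from the definition of convexity, using only the Jensen midpoint-convexity criterion recalled just above the statement. A preliminary observation drives everything: (2) is an \emph{equality} whenever $f$ is affine, since for $L(t)=\beta t+\gamma$ both sides equal $\tfrac23\big(L(x)+L(y)+L(z)\big)$. This already gives the last clause of the statement (equality for $f(t)=t$), and, more usefully, shows that $f$ satisfies (2) if and only if $f-L$ does, for every affine $L$; this normalization will be the engine of the converse.

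For the forward direction, assume $f$ is convex. Since (2) is symmetric in $x,y,z$ I may assume $x\le y\le z$, and — applying the argument to $t\mapsto f(-t)$ at $-z\le -y\le -x$ if necessary — I may further assume $m:=\tfrac{x+y+z}{3}\ge y$ (the case $2z=x+y$, i.e. $x=y=z$, being trivial). Multiplying (2) by $3$, the term $2f\!\big(\tfrac{x+y}{2}\big)$ is disposed of immediately by midpoint convexity, $2f\!\big(\tfrac{x+y}{2}\big)\le f(x)+f(y)$, so it remains to show
\[
2f\!\Big(\tfrac{x+z}{2}\Big)+2f\!\Big(\tfrac{y+z}{2}\Big)\le f(z)+3f(m).
\]
The point is that $m\ge y$ forces the ordering $m\le\tfrac{x+z}{2}\le\tfrac{y+z}{2}\le z$, so each of $\tfrac{x+z}{2},\tfrac{y+z}{2}$ is a genuine convex combination of $m$ and $z$; writing $\tfrac{x+z}{2}=\mu z+(1-\mu)m$ and $\tfrac{y+z}{2}=\lambda z+(1-\lambda)m$ with $\lambda,\mu\in[0,1]$ and applying convexity, the left side is bounded by $2(\mu+\lambda)f(z)+2(2-\mu-\lambda)f(m)$. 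A direct computation gives $\mu+\lambda=\tfrac12$ exactly, so the coefficients are precisely $1$ and $3$ and the displayed inequality follows; combining with the first bound yields (2).

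For the converse, suppose (2) holds for all $x,y,z\in J$. By Jensen's criterion it suffices to establish midpoint convexity, so suppose for contradiction that $f\!\big(\tfrac{a+b}{2}\big)>\tfrac{f(a)+f(b)}{2}$ for some $a<b$ in $J$. Replacing $f$ by $g:=f-L$, where $L$ is the affine interpolant of $f$ at $a$ and $b$ (legitimate by the preliminary observation), I get $g(a)=g(b)=0$ and $M:=\max_{[a,b]}g=g(c_0)>0$ for some $c_0\in(a,b)$. I then apply (2) to $g$ with $x=y=2c_0-b,\ z=b$ if $c_0\ge\tfrac{a+b}{2}$ (respectively $x=y=2c_0-a,\ z=a$ if $c_0\le\tfrac{a+b}{2}$); one checks that $x,y,z$ and the mean $\tfrac{x+y+z}{3}$ all lie in $[a,b]\subseteq J$. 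With this choice two of the three midpoints equal $c_0$, and the vanishing of $g$ at the chosen boundary point kills the remaining term, collapsing (2) to $\tfrac43 g(c_0)\le g\!\big(\tfrac{4c_0-b}{3}\big)$ (respectively the analogue with $a$). Since the right side is $g$ evaluated at a point of $[a,b]$, hence $\le M$, we get $\tfrac43 M\le M$, forcing $M\le 0$ — a contradiction. Therefore $f$ is midpoint convex and, being continuous, convex.

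I expect the main obstacle to be the bookkeeping in the forward direction: one must engineer a decomposition of the three midpoint values that forces (2) for \emph{every} convex $f$, not merely for a handful of test functions, and this is exactly why the identity $\mu+\lambda=\tfrac12$ — together with the reduction to the case $m\ge y$ that guarantees $\mu,\lambda\ge 0$ — is the crux rather than a routine estimate. The converse, by contrast, turns on spotting a single convenient substitution ($x=y=2c_0-b,\ z=b$) that simultaneously makes two midpoints coincide with the maximizer of the normalized defect and uses a boundary zero of $g$ to annihilate the leftover term.
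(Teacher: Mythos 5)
Your proof is correct. One point of context first: the paper does not actually prove this statement --- it is quoted as Popoviciu's classical 1965 characterization of convexity (with a citation to \cite{23}) and is used only as motivation for the operator versions in Section 6 --- so there is no in-paper argument to compare yours against; what you have written is a complete, self-contained proof of the cited fact, and it is essentially the standard one. In the forward direction the crux checks out: with $x\le y\le m:=\tfrac{x+y+z}{3}\le z$ one computes $\mu+\lambda=\frac{(2z-x-y)/6}{(2z-x-y)/3}=\tfrac12$, the ordering $m\le\tfrac{x+z}{2}\le\tfrac{y+z}{2}\le z$ follows from $2y\le x+z$ (which is exactly $m\ge y$), and the reduction to $m\ge y$ via $t\mapsto f(-t)$ is legitimate because (2) is symmetric in $x,y,z$ and invariant under this reflection. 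In the converse, subtracting the affine interpolant is justified by your preliminary observation that (2) is an equality for affine functions; the test points $x=y=2c_0-b$, $z=b$ and the mean $\tfrac{4c_0-b}{3}$ all lie in $[a,b]\subseteq J$ when $c_0\ge\tfrac{a+b}{2}$ (and symmetrically for the other case), and the collapse to $\tfrac43\,g(c_0)\le g\bigl(\tfrac{4c_0-b}{3}\bigr)\le M$ contradicts $M>0$. The only spot deserving one more sentence of care is the parenthetical ``the case $2z=x+y$'': you should state explicitly that, given $x\le y\le z$, this forces $x=y=z$, that it is precisely the case $z=m$ in which the coefficients $\mu,\lambda$ are undefined, and that (2) is a trivial identity there.
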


In fact, Popoviciu characterization of a convex function is sound and several
mathematicians greatly received his work since that time and much of them
considered his characterization as an alternative approach to describe convex
functions. For instance, Popoviciu inequality can be considered as an elegant
generalization of Hlawka's inequality using convexity as a simple tool of
geometry. For more fact and application of Popoviciu inequality, see Popoviciu
\cite{23}, Niculescu and Popoviciu \cite{21} and Beucze, Niculescu and
Popoviciu \cite{5}, and for other related results see Mitrinovic, Pecaric and
Fink \cite{18}, Grinberg \cite{10} and Alomari \cite{3}.

In this section, we focus two operator versions of Popoviciu's inequality for
positive selfadjoint operators in reproducing kernel Hilbert spaces under
positive linear maps for both super (sub) quadratic and convex functions, and
prove the Berezin set mapping theorem (see Corollary \ref{C2}).

The relationship between the Berezin set $\mathrm{Ber}(A)$ of operator $A$ and
its spectrum $\sigma(A)$ is studied in \cite{15} for some concrete operators.
Since the so-called spectral mapping theorem of the form $\varphi
(\sigma(A))=\sigma(\varphi(A))$ plays a central role in many problems and
applications of operator theory, the same results for the Berezin set of
operators apparently will be also interesting and important. Here we do
apparently the first attempt in this direction and prove such theorem, which
looks as%
\[
f(\mathrm{Ber}(\Phi(A)))=\mathrm{Ber}(\Phi(f(A))),
\]
where $A$ is a positive selfadjoint operator on $\mathcal{H}\left(
\Omega\right)  ,$ $f$ is a superquadratic function and $\Phi:\mathcal{B}%
\left(  \mathcal{H}\left(  \Omega\right)  \right)  \mathcal{\rightarrow
}\mathcal{B}\left(  \mathcal{K(}Q\mathcal{)}\right)  $ is a normalized
positive linear map; $\mathcal{B}\left(  \mathcal{H}\left(  \Omega\right)
\right)  $ denotes the Banach algebra of all bounded linear operators on
$\mathcal{H}\left(  \Omega\right)  .$

Let $A$ be a selfadjoint linear operator on a complex Hilbert space
$(H;\left\langle .,.\right\rangle ).$ Recall that the Gelfand map establishes
a $\ast$-isometrically isomorphism $\Psi$ between the set $C(\sigma(A))$ of
all continuous functions defined on the spectrum of $A$ and the $C^{\ast}%
$-algebra $\mathcal{A}\left(  A\right)  $ generated by $A$ and the identity
operator $I_{H}$ on $H$ as follows (see for instance \cite[p. 3]{11}) :

For any two functions $f,g\in C(\sigma(A))$ and any numbers $\alpha,\beta
\in\mathbb{C}$ we have

(i) $\Psi(\alpha f+\beta g)=\alpha\Psi(f)+\beta\Psi(g);$

(ii) $\Psi(fg)=\Psi(f)\Psi(g)$ and $\Psi(\overline{f})=\Psi(f)^{\ast};$

(iii) $\left\Vert \Psi(f)\right\Vert =\left\Vert f\right\Vert :=\sup
_{t\in\sigma(A)}\left\vert f(t)\right\vert ;$

(iv) $\Psi(f_{0})=I_{H}$ and $\Psi(f_{1})=A,$ where $f_{0}(t)=1$ and
$f_{1}(t)=t,$ for $t\in\sigma(A).$

With this notation we define%
\[
f(A):=\Psi(f)\text{ for all }f\in C(\sigma(A))
\]
and we call it the continuous functional calculus for a selfadjoint operator
$A.$

If $A$ is a self-adjoint operator and $f$ is a real valued continuous function
on $\sigma(A),$ then $f(t)\geq0$ for any $t\in\sigma(A)$ implies that
$f(A)\geq0,$ i.e., $f(A)$ is a positive operator on $H.$ Moreover, if both $f$
and $g$ are real valued functions on $\sigma(A),$ then the following important
property holds:%
\begin{equation}
f(t)\geq g(t)\text{ for any }t\in\sigma(A)\text{ implies that }f(A)\geq g(A)
\label{3}%
\end{equation}
in the operator order of $\mathcal{B}\left(  H\right)  .$

 The linear map $\Phi:\mathcal{B}\left(  \mathcal{H}\right)
\mathcal{\rightarrow B}\left(  \mathcal{K}\right)  $ is positive if it
preserves the operator order, i.e., if $A\in\mathcal{B}^{+}(\mathcal{H})$ then
$\Phi(A)\in\mathcal{B}^{+}(\mathcal{K}).$ Obviously, a positive linear map
$\Phi$ preserves the order relation, namely $A\leq B\Rightarrow\Phi(A)\leq
\Phi(B)$ and preserves the adjoint operation $\Phi(A^{\ast})=\Phi(A)^{\ast}.$
Moreover, $\Phi$ is said to be normalized (unital) if it preserves the
identity operator, i.e., $\Phi(I_{\mathcal{H}})=I_{\mathcal{K}}.$

Our first result proves the operator version of the Popoviciu inequality for
superquadratic functions under positive linear maps. The proof uses the same
argument used in the proof of Theorem $2.2$ of the work \cite{3} (which we
omit); only for completeness we provide here some sketch of the proof. 

The Berezin transform of $T\in \mathcal{B}(\mathcal{H})$ is  defined as $\widetilde{T}(x) := \langle T\hat{k}_{x},\hat{k}_{x} \rangle_{\mathcal{H}}$. For notational convenience, we denote the Berezin transform $\widetilde{T}(x) = (T)^{\widetilde{•}}(x).$
\begin{theorem}
\label{T2}Let $\mathcal{H}=\mathcal{H}\left(  \Omega\right)  $and
$\mathcal{K}=\mathcal{K}\left(  Q\right)  $be two reproducing kernel Hilbert
spaces over the set $\Omega$ and $Q$ with the normalized reproducing kernels
$\widehat{k}_{\lambda,\mathcal{H}}$ and $\widehat{k}_{\mu,\mathcal{K}},$ and
let $A,B,C\in\mathcal{B}\left(  \mathcal{H}\right)  $ be three positive
selfadjoint operators, $\Phi:\mathcal{B}\left(  \mathcal{H}\right)
\mathcal{\rightarrow B}\left(  \mathcal{K}\right)  $ be a normalized positive
linear map. If $f:\left[  0,\infty\right)  \rightarrow\mathbb{R}$ is
continuous superquadratic, then
\begin{align*}
&  \left(  \Phi\left(  \frac{f(A)+f(B)+f(C)}{3}\right)  \right)
^{\widetilde{}}(\mu)+f\left(  \Phi\left(  \frac{A+B+C}{3}\right)
^{\widetilde{}}(\mu)\right)  \\
&  \geq\frac{2}{3}\left[  f\left(  \Phi\left(  \frac{A+B}{2}\right)
^{\widetilde{}}(\mu)\right)  +f\left(  \Phi\left(  \frac{B+C}{2}\right)
^{\widetilde{}}(\mu)\right)  +f\left(  \Phi\left(  \frac{A+C}{2}\right)
^{\widetilde{}}(\mu)\right)  \right]  \\
&  +\frac{1}{3}\left[  \left(  \Phi\left(  f\left\vert \left(  A-\Phi\left(
\frac{B+C}{2}\right)  ^{\widetilde{}}(\mu)I_{\mathcal{H}}\right)  \right\vert
\right)  \right)  ^{\widetilde{}}(\mu)\right.
\end{align*}%
\begin{align*}
&  +f\left(  \left\vert \left(  \Phi\left(  \frac{2A-B-C}{6}\right)  \right)
^{\widetilde{}}(\mu)\right\vert \right)  +\left(  \Phi\left(  f\left(
\left\vert C-\Phi\left(  \frac{A+B}{2}\right)  ^{\widetilde{}}(\mu
)I_{\mathcal{H}}\right\vert \right)  \right)  \right)  ^{\widetilde{}}(\mu)\\
&  +f\left(  \left\vert \left(  \Phi\left(  \frac{2C-A-B}{6}\right)  \right)
^{\widetilde{}}(\mu)\right\vert \right)  +\left(  \Phi\left(  f\left(
\left\vert B-\Phi\left(  \frac{A+C}{2}\right)  ^{\widetilde{}}(\mu
)I_{\mathcal{H}}\right\vert \right)  \right)  \right)  ^{\widetilde{}}(\mu)
\end{align*}%
\begin{equation}
\left.  +f\left(  \left\vert \left(  \Phi\left(  \frac{2B-A-C}{6}\right)
\right)  ^{\widetilde{}}(\mu)\right\vert \right)  \right]  \label{4}%
\end{equation}
for each $\mu\in Q.$
\end{theorem}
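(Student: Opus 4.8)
The plan is to strip the reproducing-kernel layer off first. Fix $\mu\in Q$ and define $\varphi_{\mu}\colon\mathcal{B}(\mathcal{H})\to\mathbb{C}$ by $\varphi_{\mu}(T):=\langle\Phi(T)\widehat{k}_{\mu,\mathcal{K}},\widehat{k}_{\mu,\mathcal{K}}\rangle_{\mathcal{K}}$, so that $\varphi_{\mu}(T)=(\Phi(T))^{\widetilde{}}(\mu)$. Since $\Phi$ is linear and positive and $T\mapsto\langle T\widehat{k}_{\mu,\mathcal{K}},\widehat{k}_{\mu,\mathcal{K}}\rangle_{\mathcal{K}}$ is a vector state on $\mathcal{B}(\mathcal{K})$, the functional $\varphi_{\mu}$ is linear and positive, and since $\Phi$ is normalized it satisfies $\varphi_{\mu}(I_{\mathcal{H}})=\langle I_{\mathcal{K}}\widehat{k}_{\mu,\mathcal{K}},\widehat{k}_{\mu,\mathcal{K}}\rangle_{\mathcal{K}}=1$; thus $\varphi_{\mu}$ is a state on $\mathcal{B}(\mathcal{H})$. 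Writing $m:=\varphi_{\mu}(\tfrac{A+B+C}{3})$ and $p_{A}:=\varphi_{\mu}(\tfrac{B+C}{2})$, $p_{B}:=\varphi_{\mu}(\tfrac{A+C}{2})$, $p_{C}:=\varphi_{\mu}(\tfrac{A+B}{2})$ --- all nonnegative real numbers --- and reading every operator function ($f(A)$, $f(|A-p_{A}I_{\mathcal{H}}|)$, and so on) through the continuous functional calculus on $\mathcal{H}$, inequality $(\ref{4})$ is exactly an operator Popoviciu inequality for the superquadratic function $f$ and the positive operators $A,B,C$ relative to the single state $\varphi_{\mu}$. So it suffices to prove it for an arbitrary state $\varphi_{\mu}$ and then let $\mu$ range over $Q$; the symbol $(\cdot)^{\widetilde{}}(\mu)$ reappears merely as evaluation at $\varphi_{\mu}$.

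The first tool is the operator lift of the defining inequality $(\ref{1})$. Since $f$ is superquadratic, for each real $c\geq0$ there is $C_{c}\in\mathbb{R}$ with $f(t)\geq f(c)+C_{c}(t-c)+f(|t-c|)$ for all $t\geq0$; and since $X\in\{A,B,C\}$ is positive self-adjoint with $\sigma(X)\subseteq[0,\infty)$, feeding this pointwise inequality into the functional calculus and using the order property $(\ref{3})$ gives, on $\mathcal{H}$,
\[
f(X)\ \geq\ f(c)\,I_{\mathcal{H}}+C_{c}\,(X-cI_{\mathcal{H}})+f\bigl(|X-cI_{\mathcal{H}}|\bigr).
\]
Applying $\varphi_{\mu}$ (linear, positive, normalized) yields $\varphi_{\mu}(f(X))\geq f(c)+C_{c}(\varphi_{\mu}(X)-c)+\varphi_{\mu}\bigl(f(|X-cI_{\mathcal{H}}|)\bigr)$. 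Taking $c=p_{A}$ for $X=A$, $c=p_{B}$ for $X=B$ and $c=p_{C}$ for $X=C$ is what produces the three mixed correction terms $\varphi_{\mu}\bigl(f(|A-p_{A}I_{\mathcal{H}}|)\bigr)$, $\varphi_{\mu}\bigl(f(|B-p_{B}I_{\mathcal{H}}|)\bigr)$, $\varphi_{\mu}\bigl(f(|C-p_{C}I_{\mathcal{H}}|)\bigr)$ appearing in $(\ref{4})$. A purely scalar companion, also read off from $(\ref{1})$ by choosing $x$ to be the weighted mean, is the refined Jensen inequality $f\bigl(\sum_{i}\lambda_{i}t_{i}\bigr)\leq\sum_{i}\lambda_{i}f(t_{i})-\sum_{i}\lambda_{i}f\bigl(|t_{i}-\sum_{j}\lambda_{j}t_{j}|\bigr)$ valid for $t_{i}\geq0$ and weights $\lambda_{i}\geq0$ with $\sum_{i}\lambda_{i}=1$.

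The second tool is the classical Popoviciu combinatorics, carried out exactly as in the proof of Theorem $2.2$ of \cite{3}. One records the elementary identities $m=\tfrac{1}{3}(p_{A}+p_{B}+p_{C})$, $\varphi_{\mu}(A)=p_{B}+p_{C}-p_{A}$ (and cyclically for $B,C$), and hence $\varphi_{\mu}(\tfrac{2A-B-C}{6})=m-p_{A}$, $\varphi_{\mu}(\tfrac{2B-A-C}{6})=m-p_{B}$, $\varphi_{\mu}(\tfrac{2C-A-B}{6})=m-p_{C}$, which identify the remaining three terms of $(\ref{4})$ as $f(|m-p_{A}|)$, $f(|m-p_{B}|)$, $f(|m-p_{C}|)$. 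One then reruns the proof of the scalar Popoviciu inequality (Theorem $\ref{T1}$) for the three real numbers $\varphi_{\mu}(A),\varphi_{\mu}(B),\varphi_{\mu}(C)$: order them, split into the corresponding cases, write each of the averages occurring in $(\ref{4})$ as an explicit convex combination of $\varphi_{\mu}(A),\varphi_{\mu}(B),\varphi_{\mu}(C)$ and $m$, and in each such estimate replace the plain convexity bound by the refined superquadratic Jensen bound above. Combining these refined scalar estimates with the operator estimates of the previous paragraph and collecting terms delivers $(\ref{4})$ for the fixed $\mu$, hence for all $\mu\in Q$.

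The step I expect to be the genuine obstacle is precisely this last assembly. The linear ``support-line'' contributions $C_{c}(\cdot)$ coming from $(\ref{1})$ do not cancel term by term; it is only the specific three-pair/one-mean pattern of Popoviciu --- together with the identity $\sum_{i}(m-p_{i})=0$ and the base-point choices $c=p_{A},p_{B},p_{C}$ matched to the splitting --- that makes them recombine with the correct sign while the correction terms settle into exactly the form $\tfrac{1}{3}\bigl[\varphi_{\mu}(f(|A-p_{A}I_{\mathcal{H}}|))+f(|m-p_{A}|)+\cdots\bigr]$ rather than into a coarser expression. Controlling the coefficients and the absolute values so that they land as in $(\ref{4})$ is the delicate point; once Alomari's scalar/operator computation in \cite{3} is granted, the reproducing-kernel layer contributes nothing beyond the reduction of the first paragraph.
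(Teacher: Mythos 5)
Your proposal follows essentially the same route as the paper's own proof: lift the defining inequality (\ref{1}) to operators via the continuous functional calculus and property (\ref{3}), push it through the normalized positive map $\Phi$ and the Berezin transform at $\mu$ (your state $\varphi_{\mu}$ is exactly this composition), apply the result to $A,B,C$ with base points $\Phi\left(\frac{B+C}{2}\right)^{\widetilde{}}(\mu)$, $\Phi\left(\frac{A+C}{2}\right)^{\widetilde{}}(\mu)$, $\Phi\left(\frac{A+B}{2}\right)^{\widetilde{}}(\mu)$, average, and defer the final Popoviciu assembly to Theorem $2.2$ of \cite{3} --- which is precisely what the paper does. Your explicit identities $\varphi_{\mu}\left(\frac{2A-B-C}{6}\right)=m-p_{A}$ and the candid flagging of the term-collection step add useful detail, but do not change the argument.
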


\begin{proof}
Since $f$ is superquadratic on $J,$ by utilizing the continuous functional
calculus for the operator $T\geq0,$ we have by property (\ref{3}) and
inequality (\ref{1}) that
\[
f(T)\geq f(x)I_{\mathcal{H}}+C_{x}(T-xI_{\mathcal{H}})+f(\left\vert
T-xI_{\mathcal{H}}\right\vert ),
\]
and since $\Phi$ is the normalized positive linear map, we get%
\[
\Phi(f(T))\geq f(x)I_{\mathcal{K}}+C_{x}\Phi\left(  T-xI_{\mathcal{H}}\right)
+\Phi\left(  f\left(  \left\vert T-xI_{\mathcal{H}}\right\vert \right)
\right)
\]
which implies that%
\begin{align}
\left\langle \Phi(f(T))\widehat{k}_{\mu,\mathcal{K}},\widehat{k}%
_{\mu,\mathcal{K}}\right\rangle  &  \geq f(x)\left\langle \widehat{k}%
_{\mu,\mathcal{K}},\widehat{k}_{\mu,\mathcal{K}}\right\rangle \nonumber\\
&  +C_{x}\left\langle \left[  \Phi\left(  T-xI_{\mathcal{H}}\right)  \right]
\widehat{k}_{\mu,\mathcal{K}},\widehat{k}_{\mu,\mathcal{K}}\right\rangle
\nonumber\\
&  +\left\langle \Phi\left(  f\left(  \left\vert T-xI_{\mathcal{H}}\right\vert
\right)  \right)  \widehat{k}_{\mu,\mathcal{K}},\widehat{k}_{\mu,\mathcal{K}%
}\right\rangle \label{5}%
\end{align}
for each $\mu\in Q.$

Let $A,B,C$ be three positive selfadjoint operators in $\mathcal{B}\left(
\mathcal{H}\right)  .$ Since $f$ is superquadratic, by applying (\ref{5}) for
the operator $A\geq0$ with $x_{1}=\Phi\left(  \frac{B+C}{2}\right)
^{\widetilde{}}(\mu),$ we get%
\begin{align}
\Phi\left(  f(A)\right)  ^{\widetilde{}}(\mu)  &  \geq f\left(  \Phi\left(
\frac{B+C}{2}\right)  ^{\widetilde{}}(\mu)\right)  +C_{x_{1}}\left(
\Phi\left(  \frac{2A-B-C}{2}\right)  \right)  ^{\widetilde{}}(\mu)\nonumber\\
&  +\left(  \Phi\left(  f\left(  \left\vert A-\Phi\left(  \frac{B+C}%
{2}\right)  ^{\widetilde{}}(\mu)\right\vert \right)  \right)  \right)
^{\widetilde{}}(\mu) \label{6}%
\end{align}
for each $\mu\in Q.$

Again applying (\ref{5}) for the operator $C\geq0$ with $x_{2}=\Phi\left(
\frac{A+B}{2}\right)  ^{\widetilde{}}(\mu),$ we have%
\begin{align}
\Phi\left(  f(C)\right)  ^{\widetilde{}}(\mu)  &  \geq f\left(  \Phi\left(
\frac{A+B}{2}\right)  ^{\widetilde{}}(\mu)\right)  +C_{x_{2}}\left(
\Phi\left(  \frac{2C-A-B}{2}\right)  \right)  ^{\widetilde{}}(\mu)\nonumber\\
&  +\left(  \Phi\left(  f\left(  \left\vert C-\Phi\left(  \frac{A+B}%
{2}\right)  ^{\widetilde{}}(\mu)I_{\mathcal{H}}\right\vert \right)  \right)
\right)  ^{\widetilde{}}(\mu) \label{7}%
\end{align}
for each $\mu\in Q.$ Also, for the operator $B\geq0$ with $x_{3}=\Phi\left(
\frac{A+C}{2}\right)  ^{\widetilde{}}(\mu),$ we get%
\begin{align}
\Phi\left(  f(B)\right)  ^{\widetilde{}}(\mu)  &  \geq f\left(  \Phi\left(
\frac{A+C}{2}\right)  ^{\widetilde{}}(\mu)\right)  +C_{x_{3}}\left(
\Phi\left(  \frac{2B-A-C}{2}\right)  \right)  ^{\widetilde{}}(\mu)\nonumber\\
&  +\left(  \Phi\left(  f\left(  \left\vert B-\Phi\left(  \frac{A+C}%
{2}\right)  ^{\widetilde{}}(\mu)I_{\mathcal{H}}\right\vert \right)  \right)
\right)  ^{\widetilde{}}(\mu) \label{8}%
\end{align}
for each $\mu\in Q.$

We have from (\ref{6}), (\ref{7}), (\ref{8}) that%
\begin{align*}
&  \left(  \Phi\left(  \frac{f(A)+f(B)+f(C)}{3}\right)  \right)
^{\widetilde{}}(\mu)\\
&  \geq\frac{1}{3}\left[  f\left(  \Phi\left(  \frac{A+B}{2}\right)
^{\widetilde{}}(\mu)\right)  +f\left(  \Phi\left(  \frac{B+C}{2}\right)
^{\widetilde{}}(\mu)\right)  +f\left(  \Phi\left(  \frac{A+C}{2}\right)
^{\widetilde{}}(\mu)\right)  \right]  \\
&  +\frac{1}{3}\left[  C_{x_{1}}\left(  \Phi\left(  \frac{2A-B-C}{2}\right)
\right)  ^{\widetilde{}}(\mu)+C_{x_{2}}\left(  \Phi\left(  \frac{2C-A-B}%
{2}\right)  \right)  ^{\widetilde{}}(\mu)\right.  \\
&  \left.  +C_{x_{3}}\left(  \Phi\left(  \frac{2B-A-C}{2}\right)  \right)
^{\widetilde{}}(\mu)\right]  \\
&  +\left(  \Phi\left(  f\left(  \left\vert C-\Phi\left(  \frac{A+B}%
{2}\right)  ^{\widetilde{}}(\mu)I_{\mathcal{H}}\right\vert \right)  \right)
\right)  ^{\widetilde{}}(\mu)\\
&  +\left(  \Phi\left(  f\left(  \left\vert A-\Phi\left(  \frac{B+C}%
{2}\right)  ^{\widetilde{}}(\mu)I_{\mathcal{H}}\right\vert \right)  \right)
\right)  ^{\widetilde{}}(\mu)\\
&  +\left(  \Phi\left(  f\left(  \left\vert B-\Phi\left(  \frac{A+C}%
{2}\right)  ^{\widetilde{}}(\mu)I_{\mathcal{H}}\right\vert \right)  \right)
\right)  ^{\widetilde{}}(\mu),
\end{align*}
which finally implies the required inequality (\ref{4}) (see the proof of
Theorem 2.2. in \cite{3}).
\end{proof}

The following result (see Corollary \ref{C1}, (ii) below) gives, in
particular, a refinement of the main property of superquadratic functions that
$-f(0)\geq0$ (see Lemma \ref{L1}, (i)).

\begin{corollary}
\label{C1}Let $A\in\mathcal{B(H}\left(  \Omega\right)  \mathcal{)}$ be a
positive self-adjoint operator and $\Phi:\mathcal{B}\left(  \mathcal{H}\left(
\Omega\right)  \right)  \mathcal{\rightarrow}\mathcal{B(K(}Q\mathcal{))}$ be a
normalized positive linear map. Let $f:\left[  0,\infty\right)  \rightarrow
\mathbb{R}$ be a continuous superquadratic function. Then:

(i)
\begin{equation}
-f(0)\geq f\left(  \left(  \Phi\left(  A\right)  \right)  ^{\widetilde{}%
}\left(  \mu\right)  \right)  -\Phi\left(  f(A)\right)  ^{\widetilde{}}\left(
\mu\right)  +\left(  \Phi\left(  f\left(  \left\vert A-\left(  \Phi(A)\right)
^{\widetilde{}}(\mu)I_{\mathcal{H}}\right\vert \right)  \right)  \right)
^{\widetilde{}}(\mu) \label{16}%
\end{equation}
for all $\mu\in Q;$

(ii)
\begin{align}
-f(0) &  \geq\sup_{\mu\in Q}\left[  f\left(  \left(  \Phi\left(  A\right)
\right)  ^{\widetilde{}}\left(  \mu\right)  \right)  -\left(  \Phi\left(
f(A)\right)  \right)  ^{\widetilde{}}\left(  \mu\right)  +\right.  \nonumber\\
&  +\left.  \left(  \Phi\left(  f\left(  \left\vert A-\widetilde{\Phi(A)}%
(\mu)I_{\mathcal{H}}\right\vert \right)  \right)  \right)  ^{\widetilde{}}%
(\mu)\right]  .\label{17}%
\end{align}

\end{corollary}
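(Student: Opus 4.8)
The plan is to deduce both parts of the corollary from Theorem~\ref{T2} by specializing to the ``diagonal'' case $B=C=A$; no new ingredient is required, only bookkeeping.

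First I would put $B=C=A$ in inequality~(\ref{4}). Under this substitution every average degenerates: $\frac{A+B}{2}=\frac{B+C}{2}=\frac{A+C}{2}=\frac{A+B+C}{3}=A$ and $\frac{f(A)+f(B)+f(C)}{3}=f(A)$, so the three terms $f\!\big(\Phi(\tfrac{A+B}{2})^{\widetilde{}}(\mu)\big)$, $f\!\big(\Phi(\tfrac{B+C}{2})^{\widetilde{}}(\mu)\big)$, $f\!\big(\Phi(\tfrac{A+C}{2})^{\widetilde{}}(\mu)\big)$ all collapse to $f\!\big((\Phi(A))^{\widetilde{}}(\mu)\big)$, and the three terms of the form $\big(\Phi(f(|A-\Phi(\tfrac{B+C}{2})^{\widetilde{}}(\mu)I_{\mathcal{H}}|))\big)^{\widetilde{}}(\mu)$ all collapse to $\big(\Phi(f(|A-(\Phi(A))^{\widetilde{}}(\mu)I_{\mathcal{H}}|))\big)^{\widetilde{}}(\mu)$. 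Moreover each centered combination $\tfrac{2A-B-C}{6}$, $\tfrac{2B-A-C}{6}$, $\tfrac{2C-A-B}{6}$ becomes the zero operator; since $\Phi$ is linear, $\Phi(0)=0$, and the Berezin transform of the zero operator vanishes identically on $Q$, so the three terms $f\!\big(|(\Phi(\tfrac{2A-B-C}{6}))^{\widetilde{}}(\mu)|\big)$ each equal $f(0)$. Substituting these simplifications into~(\ref{4}) and using $\tfrac23\cdot 3=2$ and $\tfrac13\cdot 3=1$, inequality~(\ref{4}) becomes
\[
\big(\Phi(f(A))\big)^{\widetilde{}}(\mu)+f\!\big((\Phi(A))^{\widetilde{}}(\mu)\big)\ \geq\ 2f\!\big((\Phi(A))^{\widetilde{}}(\mu)\big)+\big(\Phi(f(|A-(\Phi(A))^{\widetilde{}}(\mu)I_{\mathcal{H}}|))\big)^{\widetilde{}}(\mu)+f(0),
\]
and transposing terms produces exactly~(\ref{16}). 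For part~(ii), the left-hand side of~(\ref{16}) is the constant $-f(0)$, independent of $\mu$, while~(\ref{16}) holds for every $\mu\in Q$; hence $-f(0)$ dominates the $\mu$-dependent right-hand side uniformly, and passing to the supremum over $\mu\in Q$ yields~(\ref{17}).

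I do not expect a genuine obstacle; the only points requiring (routine) verification are that the averaged operators really degenerate when $B=C=A$, that $\Phi(0)=0$ and $\widetilde{0}\equiv 0$ so that the ``$f$ of the modulus of a Berezin transform'' terms genuinely become $f(0)$ and not something larger, and that the normalizations $\Phi(I_{\mathcal{H}})=I_{\mathcal{K}}$ and $\|\widehat{k}_{\mu,\mathcal{K}}\|_{\mathcal{K}}=1$---already used inside the proof of Theorem~\ref{T2}---leave no stray factor $\langle\widehat{k}_{\mu,\mathcal{K}},\widehat{k}_{\mu,\mathcal{K}}\rangle$ in the final inequality. Finally, to see why~(\ref{16}) sharpens the bare bound $-f(0)\geq 0$ of Lemma~\ref{L1}(i), note that in the well-behaved case $f\geq 0$ the extra summand $\big(\Phi(f(|A-(\Phi(A))^{\widetilde{}}(\mu)I_{\mathcal{H}}|))\big)^{\widetilde{}}(\mu)$ on the right of~(\ref{16}) is the Berezin transform of a positive operator, so~(\ref{16}) reads as a Jensen-type refinement alongside the defect $f((\Phi(A))^{\widetilde{}}(\mu))-(\Phi(f(A)))^{\widetilde{}}(\mu)$.
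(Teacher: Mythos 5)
Your proposal is correct and coincides with the paper's own (one-line) proof: the authors likewise obtain (\ref{16}) by setting $C=B=A$ in (\ref{4}) and then deduce (\ref{17}) by taking the supremum over $\mu\in Q$. Your explicit bookkeeping of how the averages, the centered terms $\frac{2A-B-C}{6}$, and the factor $\frac{2}{3}\cdot 3$ versus $\frac{1}{3}\cdot 3$ collapse is exactly the verification the paper leaves implicit.
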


\begin{proof}
Setting $C=B=A$ in (\ref{4}), we get (\ref{16}). Inequality (\ref{17}) follows
from (\ref{16}).
\end{proof}

The following corollary means the Berezin set mapping theorem.

\begin{corollary}
\label{C2}Let $A,\Phi$ and $f$ be the same as in Corollary \ref{C1} such that
$f$ is nonnegative and%
\begin{equation}
f\left(  \left(  \Phi(A)\right)  ^{\widetilde{}}\left(  \mu\right)  \right)
\geq\Phi\left(  f(A)\right)  ^{\widetilde{}}\left(  \mu\right)  ,\text{
}\forall\mu\in Q.\label{18}%
\end{equation}
Then $f$ is convex and%
\begin{equation}
f(\mathrm{Ber}(\Phi(A)))=\mathrm{Ber}(\Phi(f(A))).\label{19}%
\end{equation}

\end{corollary}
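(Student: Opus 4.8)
The plan is to funnel everything through Corollary~\ref{C1}(i) and then read the set identity off a pointwise equality of Berezin transforms. First, since $f$ is a \emph{nonnegative} superquadratic function, Lemma~\ref{L1}(iii) immediately gives that $f$ is convex and that $f(0)=f'(0)=0$; this settles the first assertion of the corollary and, more importantly, records the normalization $f(0)=0$ which I will feed into (\ref{16}).

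Next I would prove that $f\left(\left(\Phi(A)\right)^{\widetilde{}}(\mu)\right)=\left(\Phi(f(A))\right)^{\widetilde{}}(\mu)$ for every $\mu\in Q$. Fix $\mu$ and set $c_{\mu}:=\left(\Phi(A)\right)^{\widetilde{}}(\mu)$. Since $A\geq0$ and $\Phi$ is positive, $\Phi(A)\geq0$, hence $c_{\mu}=\langle\Phi(A)\widehat{k}_{\mu,\mathcal{K}},\widehat{k}_{\mu,\mathcal{K}}\rangle\geq0$, and in particular $c_{\mu}$ lies in the domain $[0,\infty)$ of $f$, so $f(c_{\mu})$ makes sense. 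Plugging $f(0)=0$ into (\ref{16}) gives
\[
0\geq f(c_{\mu})-\left(\Phi(f(A))\right)^{\widetilde{}}(\mu)+\left(\Phi\left(f\left(\left|A-c_{\mu}I_{\mathcal{H}}\right|\right)\right)\right)^{\widetilde{}}(\mu).
\]
Here $\left|A-c_{\mu}I_{\mathcal{H}}\right|$ is positive, so $f$ of it is defined via the continuous functional calculus and, because $f\geq0$ on $[0,\infty)$, property (\ref{3}) forces $f\left(\left|A-c_{\mu}I_{\mathcal{H}}\right|\right)\geq0$; then positivity of $\Phi$ together with nonnegativity of the Berezin transform on the cone of positive operators makes the last summand $\geq0$. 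Discarding it yields $f(c_{\mu})\leq\left(\Phi(f(A))\right)^{\widetilde{}}(\mu)$, and since the reverse inequality is precisely the hypothesis (\ref{18}), equality holds for all $\mu\in Q$.

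Finally, to obtain (\ref{19}) I would simply take images over $\mu\in Q$: by definition $\mathrm{Ber}(\Phi(A))=\{c_{\mu}:\mu\in Q\}$, so
\[
f(\mathrm{Ber}(\Phi(A)))=\{f(c_{\mu}):\mu\in Q\}=\left\{\left(\Phi(f(A))\right)^{\widetilde{}}(\mu):\mu\in Q\right\}=\mathrm{Ber}(\Phi(f(A))),
\]
which is exactly (\ref{19}).

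I do not expect a genuine obstacle: the entire content sits inside Corollary~\ref{C1}(i) (and, through it, Theorem~\ref{T2}). The only points needing a moment's care are (a) verifying that $c_{\mu}\in[0,\infty)$ so that it is a legitimate argument of $f$, and (b) justifying nonnegativity of the remainder term $\left(\Phi\left(f\left(\left|A-c_{\mu}I_{\mathcal{H}}\right|\right)\right)\right)^{\widetilde{}}(\mu)$; both follow from positivity of $\Phi$, positivity of the Berezin transform on positive operators, and $f\geq0$. It is also worth noting that convexity of $f$ is not actually used in the proof of (\ref{19}); it is asserted only because it is a free consequence of Lemma~\ref{L1}(iii) and is worth stating alongside the mapping identity.
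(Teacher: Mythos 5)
Your proof is correct and follows essentially the same route as the paper: both derive the reverse inequality $f\left(\Phi(A)^{\widetilde{}}(\mu)\right)\leq\Phi\left(f(A)\right)^{\widetilde{}}(\mu)$ from Corollary \ref{C1}(i) by using Lemma \ref{L1} to get $f(0)=0$ and the nonnegativity of the remainder term, then combine it with hypothesis (\ref{18}) to obtain pointwise equality and hence the set identity (\ref{19}). Your additional checks (that $c_{\mu}\geq0$ lies in the domain of $f$, and the explicit justification that the discarded term is nonnegative) are sound and merely make explicit what the paper leaves implicit.
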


\begin{proof}
Since $A\geq0,$ $A^{\ast}=A,\Phi$ is a positive linear map, $f\in
C(\sigma(A))$ is nonnegative and $\left\vert A-\left(  \Phi(A)\right)
^{\widetilde{}}\left(  \mu\right)  I_{\mathcal{H}}\right\vert \geq0,$ by Lemma
\ref{L1} we deduce that $f$ is convex and $f\left(  \left\vert A-\left(
\Phi(A)\right)  ^{\widetilde{}}\left(  \mu\right)  I_{\mathcal{H}}\right\vert
\right)  \geq0$ and $f(0)=0.$ Then we have from (\ref{16}) that%
\[
0\geq f\left(  \left(  \Phi(A)\right)  ^{\widetilde{}}\left(  \mu\right)
\right)  -\left(  \Phi\left(  f(A)\right)  \right)  ^{\widetilde{}}\left(
\mu\right)  ,
\]
or equivalently%
\begin{equation}
\Phi\left(  f(A)\right)  ^{\widetilde{}}\left(  \mu\right)  \geq f\left(
\Phi(A)^{\widetilde{}}\left(  \mu\right)  \right)  ,\text{ }\forall\mu\in
Q.\label{20}%
\end{equation}
Now (\ref{18}) and (\ref{20}) imply that $f\left(  \Phi(A)^{\widetilde{}%
}\left(  \mu\right)  \right)  =\Phi\left(  f(A)\right)  ^{\widetilde{}}\left(
\mu\right)  $ for all $\mu\in Q,$ which gives the desired formula (\ref{19}).
The proof is completed.
\end{proof}

Our next results give new estimates for the Berezin number of some operators.

\begin{proposition}
\label{P1}Let $A:\mathcal{H}\left(  \Omega\right)  \mathcal{\rightarrow
H}\left(  \Omega\right)  $ be a positive self-adjoint operator, $\Phi
:\mathcal{B}\left(  \mathcal{H}\left(  \Omega\right)  \right)
\mathcal{\rightarrow B(K(}Q\mathcal{))}$ be a normalized positive linear map,
and $f:\left[  0,\infty\right)  \rightarrow\mathbb{R}$ be a non negative and
superquadratic function. Then $f$ is convex and%
\[
\mathrm{ber}(\Phi(f(A)))\geq\sup_{\mu\in Q}f\left(  \Phi(A)^{\widetilde{}%
}\left(  \mu\right)  \right)  .
\]

\end{proposition}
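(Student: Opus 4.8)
The plan is to obtain this as a quick consequence of Corollary \ref{C1}(i) combined with the positivity of $\Phi$ and of the Berezin transform. First I would record that a nonnegative superquadratic $f$ is, by Lemma \ref{L1}(iii), automatically convex with $f(0)=f^{\prime}(0)=0$; this settles the first assertion and supplies the fact $f(0)=0$ needed below. Next, since $A\geq 0$ and $A$ is bounded we have $\sigma(A)\subseteq[0,\infty)$, so $f(A)$ is well defined by the continuous functional calculus and $f\geq 0$ on $\sigma(A)$ gives $f(A)\geq 0$ by property (\ref{3}); as $\Phi$ is positive, $\Phi(f(A))\geq 0$, whence $\Phi(f(A))^{\widetilde{}}(\mu)=\langle\Phi(f(A))\widehat{k}_{\mu,\mathcal{K}},\widehat{k}_{\mu,\mathcal{K}}\rangle\geq 0$ for all $\mu\in Q$, and therefore $\mathrm{ber}(\Phi(f(A)))=\sup_{\mu\in Q}\Phi(f(A))^{\widetilde{}}(\mu)$, i.e. the modulus in the definition of the Berezin number can be dropped.

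The core step is inequality (\ref{16}) of Corollary \ref{C1}(i): for each $\mu\in Q$,
\[
-f(0)\;\geq\; f\bigl((\Phi(A))^{\widetilde{}}(\mu)\bigr)-\Phi(f(A))^{\widetilde{}}(\mu)+\bigl(\Phi\bigl(f(|A-(\Phi(A))^{\widetilde{}}(\mu)I_{\mathcal{H}}|)\bigr)\bigr)^{\widetilde{}}(\mu).
\]
The operator $|A-(\Phi(A))^{\widetilde{}}(\mu)I_{\mathcal{H}}|$ is positive, so $f\geq 0$ on its spectrum forces $f(|A-(\Phi(A))^{\widetilde{}}(\mu)I_{\mathcal{H}}|)\geq 0$; applying the positive map $\Phi$ and then the (positive) Berezin transform shows the last summand is $\geq 0$. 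Using $f(0)=0$, the displayed inequality collapses to
\[
\Phi(f(A))^{\widetilde{}}(\mu)\;\geq\; f\bigl((\Phi(A))^{\widetilde{}}(\mu)\bigr)\qquad\text{for every }\mu\in Q,
\]
which is exactly inequality (\ref{20}) occurring in the proof of Corollary \ref{C2}.

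Finally I would take the supremum over $\mu\in Q$ on both sides; by the first paragraph the left-hand supremum equals $\mathrm{ber}(\Phi(f(A)))$, so we conclude $\mathrm{ber}(\Phi(f(A)))\geq\sup_{\mu\in Q}f\bigl((\Phi(A))^{\widetilde{}}(\mu)\bigr)$, as claimed. I do not expect a genuine obstacle here: the proposition is essentially a repackaging of Corollary \ref{C1}(i), and the only points deserving care are the routine verifications that both $\Phi(f(A))$ and $f(|A-cI_{\mathcal{H}}|)$ are bona fide positive operators — so that their Berezin transforms are nonnegative and the modulus in $\mathrm{ber}(\cdot)$ is redundant — together with the use of $f(0)=0$ from Lemma \ref{L1}.
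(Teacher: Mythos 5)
Your proposal is correct and follows essentially the same route as the paper: the paper's proof likewise gets convexity from Lemma \ref{L1} and then invokes the pointwise inequality $\Phi(f(A))^{\widetilde{}}(\mu)\geq f\bigl(\Phi(A)^{\widetilde{}}(\mu)\bigr)$, which is exactly the inequality (\ref{20}) you rederive from Corollary \ref{C1}(i) by discarding the nonnegative remainder term and using $f(0)=0$. Your write-up merely makes explicit the details (positivity of $f(|A-cI_{\mathcal{H}}|)$ and of $\Phi(f(A))$, hence redundancy of the modulus in $\mathrm{ber}$) that the paper leaves implicit.
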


\begin{proof}
By considering that $f$ is non-negative superquadratic, we have by Lemma
\ref{L1} that $f$ is convex and%
\[
\Phi\left(  f(A)\right)  ^{\widetilde{}}\left(  \mu\right)  \geq f\left(
\Phi\left(  A\right)  ^{\widetilde{}}\left(  \mu\right)  \right)
\]
for all $\mu\in Q.$ This shows that%
\[
\mathrm{ber}(\Phi(f(A)))\geq\sup_{\mu\in Q}\left[  f\left(  \Phi
(A)^{\widetilde{}}\left(  \mu\right)  \right)  \right]  ,
\]
as desired.
\end{proof}

\begin{proposition}
\label{P2}Let $A\in\mathcal{B}\left(  \mathcal{H}\left(  \Omega\right)
\right)  $ be the self-adjoint operator, $\Phi:\mathcal{B}\left(
\mathcal{H}\left(  \Omega\right)  \right)  \mathcal{\rightarrow B(K(}%
Q\mathcal{))}$ be a normalized positive linear map and $f:\left[
0,\infty\right)  \rightarrow\mathbb{R}$ be the differentiable function with
$f(0)=f^{\prime}(0)=0.$ If $f^{\prime}$ is convex (concave), then $f$ is super
(sub)quadratic and%
\begin{equation}
\mathrm{ber}(\Phi(f^{^{\prime}}(A)))\geq(\leq)\sup_{\mu\in Q}\left[
f^{\prime}\left(  \Phi(A)^{\widetilde{}}\left(  \mu\right)  \right)  \right]
.\label{21}%
\end{equation}

\end{proposition}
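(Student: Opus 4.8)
The statement has a qualitative half and a quantitative half, and the qualitative half comes for free: by Lemma \ref{L2}, a differentiable $f$ with $f(0)=f^{\prime}(0)=0$ whose derivative $f^{\prime}$ is convex (concave) is automatically superquadratic (subquadratic). So the plan is to spend all the work on the Berezin number estimate (\ref{21}), and the idea is to re-run the tangent-line argument that underlies the key inequality in Proposition \ref{P1}, but with $f^{\prime}$ in the role of $f$. The point to notice in advance is that this argument uses only convexity of the function fed into it, not its nonnegativity or superquadraticity, so it is legitimate here even though $f^{\prime}$ need be neither.

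In the convex case I would fix $\mu\in Q$ and set $x_{0}:=\Phi(A)^{\widetilde{}}(\mu)=\langle\Phi(A)\widehat{k}_{\mu,\mathcal{K}},\widehat{k}_{\mu,\mathcal{K}}\rangle$. Since $A$ is positive and $\Phi$ is positive, $\Phi(A)\geq0$, hence $x_{0}\in[0,\infty)$, so $f^{\prime}(x_{0})$ is defined and $f^{\prime}$ admits a supporting line at $x_{0}$: there is $c\in\mathbb{R}$ with $f^{\prime}(t)\geq f^{\prime}(x_{0})+c(t-x_{0})$ for all $t\geq0$, in particular for all $t\in\sigma(A)$. Inserting this pointwise inequality into the continuous functional calculus for $A\geq0$ and using the order property (\ref{3}) gives $f^{\prime}(A)\geq f^{\prime}(x_{0})I_{\mathcal{H}}+c(A-x_{0}I_{\mathcal{H}})$; applying the normalized positive linear map $\Phi$, which preserves both the order and the identity, gives $\Phi(f^{\prime}(A))\geq f^{\prime}(x_{0})I_{\mathcal{K}}+c(\Phi(A)-x_{0}I_{\mathcal{K}})$; and compressing against $\widehat{k}_{\mu,\mathcal{K}}$ (i.e.\ taking the Berezin transform at $\mu$) yields
\[
\Phi(f^{\prime}(A))^{\widetilde{}}(\mu)\ \geq\ f^{\prime}(x_{0})+c\bigl(\Phi(A)^{\widetilde{}}(\mu)-x_{0}\bigr)\ =\ f^{\prime}\bigl(\Phi(A)^{\widetilde{}}(\mu)\bigr),
\]
where the linear term disappears by the choice of $x_{0}$. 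Since $\mathrm{ber}(\Phi(f^{\prime}(A)))=\sup_{\mu\in Q}\lvert\Phi(f^{\prime}(A))^{\widetilde{}}(\mu)\rvert\geq\sup_{\mu\in Q}\Phi(f^{\prime}(A))^{\widetilde{}}(\mu)$, taking the supremum over $\mu$ in the displayed inequality produces (\ref{21}).

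The concave case I would handle by the mirror image: apply the above with $-f^{\prime}$ in the role of the convex function (equivalently, reverse every inequality), so that the supporting line now lies above $f^{\prime}$, giving $\Phi(f^{\prime}(A))^{\widetilde{}}(\mu)\leq f^{\prime}(\Phi(A)^{\widetilde{}}(\mu))$ for every $\mu$ and hence $\mathrm{ber}(\Phi(f^{\prime}(A)))\leq\sup_{\mu\in Q}f^{\prime}(\Phi(A)^{\widetilde{}}(\mu))$. I do not expect a serious obstacle anywhere; the only steps that genuinely need attention are the bookkeeping ones — that $\sigma(A)\subseteq[0,\infty)$, so the scalar inequality for $f^{\prime}$ may legitimately be substituted into the functional calculus, and that $x_{0}=\Phi(A)^{\widetilde{}}(\mu)$ really lands in $[0,\infty)$, which is exactly where one needs $A$ positive rather than merely self-adjoint so that $f^{\prime}(x_{0})$ and the supporting line at $x_{0}$ make sense. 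Everything else is the same routine with normalized positive linear maps and normalized reproducing kernels already carried out in the proofs of Theorem \ref{T2} and Proposition \ref{P1}.
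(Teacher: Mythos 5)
Your proposal is correct and follows essentially the same route as the paper: superquadraticity via Lemma \ref{L2}, and then the supporting-line inequality $f'(t)\geq f'(x_{0})+c(t-x_{0})$ for the convex derivative pushed through the functional calculus, the normalized positive map $\Phi$, and the Berezin transform at $\mu$ with $x_{0}=\Phi(A)^{\widetilde{}}(\mu)$, exactly the mechanism of Theorem \ref{T2} with $f'$ in place of $f$. You have merely written out in full the steps the paper only sketches, including the worthwhile observation that the argument really needs $\sigma(A)\subseteq[0,\infty)$ so that $x_{0}$ and $f'(x_{0})$ make sense.
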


\begin{proof}
The assertion that $f$ is superquadratic, follows from Lemma \ref{L2}. To
obtain the inequality (\ref{21}) we apply the same method used in the proof of
Theorem \ref{T2} by applying the inequality (see \cite[formula $(1.2)$]{3})%
\[
f(y)\geq f(x)+\alpha(y-x),\text{ }\forall y\in J,
\]
for $f^{^{\prime}}$ instead of $(1)$ for $f,$ so that we get the desired result.
\end{proof}

\begin{proposition}
\label{P3}Let $A,\Phi$ be the same as in Proposition \ref{P2} and $f:\left[
0,\infty\right)  \rightarrow\mathbb{R}$ be the continuous function. If $f$ is
convex (concave) and $f(0)=0,$ then%
\[
\mathrm{ber}(\Phi(f(A)))\geq(\leq)\sup_{\mu\in Q}\left[  f\left(
\Phi(A)^{\widetilde{}}\left(  \mu\right)  \right)  \right]  .
\]

\end{proposition}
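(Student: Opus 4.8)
The plan is to re-run the argument from the proof of Theorem~\ref{T2} (equivalently, of Proposition~\ref{P2}), but feeding in the first-order convexity estimate $f(y)\ge f(x)+\alpha(y-x)$ in place of the superquadratic inequality~(\ref{1}). I would handle the convex case first. Fix $\mu\in Q$. Since $f$ is defined on $[0,\infty)$, the operator $f(A)$ only makes sense when $\sigma(A)\subseteq[0,\infty)$, i.e.\ $A\ge 0$; then $\Phi(A)\ge 0$ because $\Phi$ is positive, so $x_\mu:=\widetilde{\Phi(A)}(\mu)=\langle\Phi(A)\widehat{k}_{\mu,\mathcal K},\widehat{k}_{\mu,\mathcal K}\rangle$ lies in $[0,\infty)$, the domain of $f$. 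Convexity of $f$ then provides a subgradient $\alpha_\mu\in\mathbb R$ with $f(y)\ge f(x_\mu)+\alpha_\mu(y-x_\mu)$ for all $y\ge 0$.

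Next I would push this through the three standard steps of the proof of Theorem~\ref{T2}. Applying the continuous functional calculus to $A$ and the order property~(\ref{3}) gives $f(A)\ge f(x_\mu)I_{\mathcal H}+\alpha_\mu\bigl(A-x_\mu I_{\mathcal H}\bigr)$; applying the normalized positive linear map $\Phi$ preserves this order, yielding $\Phi(f(A))\ge f(x_\mu)I_{\mathcal K}+\alpha_\mu\bigl(\Phi(A)-x_\mu I_{\mathcal K}\bigr)$; and pairing both sides with $\widehat{k}_{\mu,\mathcal K}$, exactly as in the passage to~(\ref{5}), gives
\[
\widetilde{\Phi(f(A))}(\mu)\ \ge\ f(x_\mu)+\alpha_\mu\bigl(\widetilde{\Phi(A)}(\mu)-x_\mu\bigr)\ =\ f\bigl(\widetilde{\Phi(A)}(\mu)\bigr),
\]
the last equality because the choice $x_\mu=\widetilde{\Phi(A)}(\mu)$ annihilates the linear term. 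Since $|t|\ge t$ for every real $t$, taking the supremum over $\mu\in Q$ then forces $\mathrm{ber}(\Phi(f(A)))=\sup_{\mu\in Q}|\widetilde{\Phi(f(A))}(\mu)|\ge\sup_{\mu\in Q}f\bigl(\widetilde{\Phi(A)}(\mu)\bigr)$, which is the convex assertion. For the concave case I would apply the convex case to $-f$ (continuous, convex, vanishing at $0$): this yields the reversed pointwise bound $\widetilde{\Phi(f(A))}(\mu)\le f\bigl(\widetilde{\Phi(A)}(\mu)\bigr)$ for all $\mu\in Q$.

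The step I expect to need the most care is this last conversion from a pointwise inequality for the Berezin transform to a statement about $\mathrm{ber}$, since $\mathrm{ber}$ carries an absolute value: in the convex direction it is automatic (as used above), but in the concave direction I would invoke that a continuous concave $f$ on $[0,\infty)$ with $f(0)=0$ is nonnegative on $\sigma(A)$, so $f(A)\ge 0$, hence $\Phi(f(A))\ge 0$ and $|\widetilde{\Phi(f(A))}(\mu)|=\widetilde{\Phi(f(A))}(\mu)$; with that identification the reversed pointwise bound gives $\mathrm{ber}(\Phi(f(A)))\le\sup_{\mu\in Q}f\bigl(\widetilde{\Phi(A)}(\mu)\bigr)$. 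Apart from this observation everything is bookkeeping identical to the proof of Theorem~\ref{T2}, with~(\ref{1}) replaced by the supporting-line inequality (cf.\ \cite[formula~$(1.2)$]{3}); alternatively, one may simply note that $T\mapsto\langle\Phi(T)\widehat{k}_{\mu,\mathcal K},\widehat{k}_{\mu,\mathcal K}\rangle$ is a state, so $\widetilde{\Phi(f(A))}(\mu)\ge f(\widetilde{\Phi(A)}(\mu))$ is nothing but Jensen's inequality for the probability measure it induces on $\sigma(A)$.
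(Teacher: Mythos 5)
Your convex half is correct and is, at bottom, the same computation the paper performs: the paper's proof sets $F(x)=\int_0^x f(t)\,dt$, observes $F(0)=F'(0)=0$ and $F'=f$ convex (concave), and invokes Proposition \ref{P2}; but the proof of Proposition \ref{P2} consists precisely of rerunning Theorem \ref{T2} with the supporting-line inequality $f(y)\ge f(x)+\alpha(y-x)$ in place of (\ref{1}), which is exactly what you do directly. So you have merely inlined the antiderivative reduction. Your closing remark that the pointwise bound is Jensen's inequality for the state $T\mapsto\langle\Phi(T)\widehat{k}_{\mu,\mathcal K},\widehat{k}_{\mu,\mathcal K}\rangle$ is a clean way to see it, and it also disposes of the boundary case $\widetilde{\Phi(A)}(\mu)=0$, where a finite subgradient of $f$ need not exist.

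The gap is in your treatment of the concave case. The assertion that a continuous concave $f$ on $[0,\infty)$ with $f(0)=0$ is nonnegative is false: $f(x)=-x$ is concave, vanishes at $0$, and is strictly negative on $(0,\infty)$. Concavity together with $f(0)=0$ only yields $f(tx)\ge tf(x)$ for $t\in[0,1]$, not $f\ge 0$. Consequently your identification $|\widetilde{\Phi(f(A))}(\mu)|=\widetilde{\Phi(f(A))}(\mu)$ is unjustified, and for $f(x)=-x$ the claimed inequality $\mathrm{ber}(\Phi(f(A)))\le\sup_{\mu}f(\widetilde{\Phi(A)}(\mu))$ actually fails whenever $\mathrm{ber}(\Phi(A))>0$: the left side equals $\mathrm{ber}(\Phi(A))$ while the right side is $\le 0$. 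To be fair, the paper's own proof (and that of Proposition \ref{P2}) never confronts the absolute value hidden in $\mathrm{ber}$, so the concave half of the statement is itself only tenable under an extra sign hypothesis (e.g.\ $f\ge 0$) or with $\mathrm{ber}$ read as a supremum without modulus; but the specific lemma you invoke to bridge this step is not true and should not be asserted. The convex direction, which needs only $|t|\ge t$, stands as you wrote it.
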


\begin{proof}
By using Corollary $2.7$ of the paper \cite{3} for $F(x)=\int_{0}%
^{x}f(t)dt,x\in\left[  0,\infty\right)  ,$ it can be easily observed that
$F(0)=F^{\prime}(0)=0$ and $F^{\prime}(x)=f(x)$ is convex (concave) for all
$x\in\left[  0,\infty\right)  .$ So, Proposition \ref{P2} works. This
completes the proof.
\end{proof}

\textbf{Declaration of competing interest}

There is no competing interest.\\

\textbf{Data availability}

No data was used for the research described in the article.\\

{\bf Acknowledgments.} The first author is supported by the Junior Research Fellowship (09/0239(13298)/2022-EM) of CSIR (Council of Scientific and Industrial Research, India). The second author was supported by the Researchers Supporting Project number(RSPD2023R1056), King Saud University, Riyadh, Saudi Arabia. The third author is supported by the Teachers Association for Research Excellence (TAR/2022/000063) of SERB (Science and Engineering Research Board, India). First author and third author would like to thank Jaydeb Sarkar  for the valuable discussions.

\nocite{*}
\bibliographystyle{amsplain}
\bibliography{database}  

\providecommand{\bysame}{\leavevmode\hbox to3em{\hrulefill}\thinspace}
\providecommand{\MR}{\relax\ifhmode\unskip\space\fi MR }
% \MRhref is called by the amsart/book/proc definition of \MR.
\providecommand{\MRhref}[2]{%
  \href{http://www.ams.org/mathscinet-getitem?mr=#1}{#2}
}
\providecommand{\href}[2]{#2}
\begin{thebibliography}{10}

\bibitem{1}
S.~Abramovich, G.~Jameson, and G.~Sinnamon, \emph{Refining {J}ensen's
  inequality}, Bull. Math. Soc. Sci. Math. Roumanie (N.S.) \textbf{47(95)}
  (2004), no.~1-2, 3--14. \MR{2116376}

\bibitem{ahern2004range}
P.~Ahern, \emph{On the range of the {B}erezin transform}, J. Funct. Anal.
  \textbf{215} (2004), no.~1, 206--216. \MR{2085115}

\bibitem{3}
M.~W. Alomari, \emph{Operator {P}opoviciu's inequality for superquadratic and
  convex functions of selfadjoint operators in {H}ilbert spaces}, Adv. Pure
  Appl. Math. \textbf{10} (2019), no.~4, 313--324. \MR{4015204}

\bibitem{5}
M.~Bencze, C.~P. Niculescu, and F.~Popovici, \emph{Popoviciu's inequality for
  functions of several variables}, J. Math. Anal. Appl. \textbf{365} (2010),
  no.~1, 399--409. \MR{2585112}

\bibitem{berezin1972covariant}
F.~A. Berezin, \emph{Covariant and contravariant symbols of operators}, Izv.
  Akad. Nauk SSSR Ser. Mat. \textbf{36} (1972), 1134--1167. \MR{0350504}

\bibitem{cowen22}
C.~C. Cowen and C.~Felder, \emph{Convexity of the {B}erezin range}, Linear
  Algebra Appl. \textbf{647} (2022), 47--63. \MR{4413326}

\bibitem{englivs1995toeplltz}
M.~Engli\v{s}, \emph{Toeplitz operators and the {B}erezin transform on
  {$H^2$}}, vol. 223/224, 1995, Special issue honoring Miroslav Fiedler and
  Vlastimil Pt\'{a}k, pp.~171--204. \MR{1340692}

\bibitem{10}
D.~Grinberg, \emph{Generalizations of popoviciu's inequality}, arXiv:0803.2958
  (2008).

\bibitem{toeplitz}
K.~Gustafson, \emph{The {T}oeplitz-{H}ausdorff theorem for linear operators},
  Proc. Amer. Math. Soc. \textbf{25} (1970), 203--204. \MR{262849}

\bibitem{9}
J.~Jensen, \emph{Sur les fonctions convexes et les in\'{e}galit\'{e}s entre les
  valeurs moyennes}, Acta Math. \textbf{30} (1906), no.~1, 175--193.
  \MR{1555027}

\bibitem{karaev2006}
M.~T. Karaev, \emph{Berezin symbol and invertibility of operators on the
  functional {H}ilbert spaces}, J. Funct. Anal. \textbf{238} (2006), no.~1,
  181--192. \MR{2253012}

\bibitem{karaev2013reproducing}
\bysame, \emph{Reproducing kernels and {B}erezin symbols techniques in various
  questions of operator theory}, Complex Anal. Oper. Theory \textbf{7} (2013),
  no.~4, 983--1018. \MR{3079840}

\bibitem{15}
M.~T. Karaev and M.~G\"{u}rdal, \emph{On the {B}erezin symbols and {T}oeplitz
  operators}, Extracta Math. \textbf{25} (2010), no.~1, 83--102. \MR{2766252}

\bibitem{mitkovski2014reproducing}
M.~Mitkovski and B.~D. Wick, \emph{A reproducing kernel thesis for operators on
  {B}ergman-type function spaces}, J. Funct. Anal. \textbf{267} (2014), no.~7,
  2028--2055. \MR{3250359}

\bibitem{18}
D.~S. Mitrinovic, J.~Pecaric, and A.~M. Fink, \emph{Classical and new
  inegualities in analysis}, Kluwer Academic, Dordrecht, 1993.

\bibitem{19}
F.~C. Mitroi and N.~Minculete, \emph{On the {J}ensen functional and
  superquadraticity}, Aequationes Math. \textbf{90} (2016), no.~4, 705--718.
  \MR{3523093}

\bibitem{20}
B.~Mond and J.~E. Pe\v{c}ari\'{c}, \emph{Convex inequalities in {H}ilbert
  space}, Houston J. Math. \textbf{19} (1993), no.~3, 405--420. \MR{1242427}

\bibitem{21}
C.~P. Niculescu and F.~Popovici, \emph{A refinement of {P}opoviciu's
  inequality}, Bull. Math. Soc. Sci. Math. Roumanie (N.S.) \textbf{49(97)}
  (2006), no.~3, 285--290. \MR{2267127}

\bibitem{paulsen2016introduction}
V.~I. Paulsen and M.~Raghupathi, \emph{An introduction to the theory of
  reproducing kernel {H}ilbert spaces}, Cambridge Studies in Advanced
  Mathematics, vol. 152, Cambridge University Press, Cambridge, 2016.
  \MR{3526117}

\bibitem{11}
J.~Pe\u{c}ari\'{c}, T.~Furuta, Jadranka Mi\'{c}i\'{c}~H., and Y.~Seo,
  \emph{Mond-{P}e\v{c}ari\'{c} method in operator inequalities}, Monographs in
  Inequalities, vol.~1, ELEMENT, Zagreb, 2005, Inequalities for bounded
  selfadjoint operators on a Hilbert space. \MR{3026316}

\bibitem{23}
T.~Popovici, \emph{Sur certaines inegalites qui caracterisent les functions
  convexes}, An. \c{S}tiin\c{t}. Univ. Al. I. Cuza Ia\c{s}i Sec\c{t}. I a Mat.
  \textbf{11} (1965), 155--164.

\bibitem{zhu2007operator}
K.~Zhu, \emph{Operator theory in function spaces}, second ed., Mathematical
  Surveys and Monographs, vol. 138, American Mathematical Society, Providence,
  RI, 2007. \MR{2311536}

\end{thebibliography}
\end{document}